\newtheorem{theorem}{Theorem}[section]
\newtheorem{corollary}[theorem]{Corollary}
\newtheorem{lemma}[theorem]{Lemma}
\newtheorem{proposition}[theorem]{Proposition}
\newtheorem*{thm:dowkerequivalence}{Theorem \ref{thm:dowkerequivalence}}
\newtheorem*{thm:dowkerduality}{Theorem \ref{thm:dowkerduality}}
\theoremstyle{definition}
\newtheorem{definition}[theorem]{Definition}
\newtheorem*{thm:dowkerrelation}{Definition \ref{thm:dowkerrelation}}
\newtheorem*{thm:dowkerfiber}{Definition \ref{thm:dowkerfiber}}
\newtheorem{remark}[theorem]{Remark}
\newcommand{\set}{\texttt{Set}}
\newcommand{\rel}{\texttt{Rel}}
\newcommand{\Top}{\texttt{Top}}
\newcommand{\cat}{\texttt{Cat}}
\newcommand{\sset}{\mathcal{S}}
\newcommand{\ssset}{\sset^2}
\newcommand{\op}{^\texttt{op}}
\newcommand{\ob}[1]{\text{ob}(#1)}
\newcommand{\id}{\mathbbm{1}}
\newcommand{\sing}{\text{Sing}}
\newcommand{\transformation}{transformation}
\newcommand{\R}{\mathcal{R}}
\newcommand{\RR}{\mathbb{R}}
\newcommand{\C}{\mathcal{C}}
\newcommand{\A}{\mathcal{A}}
\newcommand{\D}{\mathcal{D}}
\newcommand{\Vtr}{\mathcal{V}}
\newcommand{\Ns}{N}
\newcommand{\tw}{\operatorname{tw}}
\newcommand{\oD}{\operatorname{diag}}
\newcommand{\p}{\operatorname{p}\!}
\newcommand{\oP}{\operatorname{P}\!}
\newcommand{\od}{d}
\newcommand{\V}{\mathcal{V}}
\title{Dowker Duality for Relations of Categories}
\author[1]{Morten Brun}
\author[1]{Marius Gårdsmann Fosse}
\author[1]{Lars M. Salbu}
\affil[1]{Department of Mathematics, University of Bergen.}
\begin{document}

\maketitle
\begin{abstract}
    We propose a categorification of the Dowker duality theorem for
    relations. Dowker's theorem states that the Dowker complex of a relation \(R \subseteq X \times Y\)
    of sets \(X\) and \(Y\) is homotopy equivalent to the Dowker complex of the transpose relation
    \(R^T \subseteq Y \times X\).
    Given a relation \(R\) of small categories \(\C\) and \(\D\), that is, a functor of the form \(R \colon \R \to \C \times \D\), we 
    define the {\em bisimplicial rectangle nerve \(ER\)} and the 
    \emph{Dowker nerve} \(DR\). The diagonal \(d(ER)\) of the bisimplicial set
    \(ER\) maps to the simplicial set \(DR\) by a 
    natural projection \(d(\pi_R) \colon d(ER) \to DR\).
    
    We introduce a criterion on relations of categories ensuring that the projection from the diagonal of the bisimplicial rectangle nerve to the Dowker nerve is a weak equivalence. Relations satisfying this criterion are called \emph{Dowker relations}.
    If both the relation \(R\) of categories and its transpose relation
    \(R^T\) are Dowker relations, then the Dowker nerves \(DR\) and \(DR^T\) 
    are weakly equivalent simplicial sets.

    In order to justify the abstraction introduced by our categorification
    we give two applications.
    The first application is to show that Quillen's Theorem A can be
    considered as an instance of Dowker duality.
    In the second application we consider a simplicial complex $K$ with vertex set $V$ and show that the geometric realization of $K$ is naturally homotopy equivalent to the geometric realization of the simplicial set with the set of $n$-simplices given by functions $\{0,1,\dots,n\}\to V$ whose image is a simplex of $K$.
\end{abstract}

\section{Introduction}

In the paper
``Homology Groups of Relations''\cite{Dowker} from 1952,
C.H. Dowker associates an abstract simplicial complex \(D(R)\) to a relation $R\subseteq
X\times Y$ from a set \(X\) to a set \(Y\). The vertex set of \(D(R)\) is the
set \(X\), and a subset \(\sigma\) of \(X\) is a simplex in \(D(R)\) if and only
if there exists an element \(y \in Y\) such that \(\sigma \times \{y\} \subseteq
R\).
Dowker's theorem \cite[Theorem 1a, p.
89]{Dowker} states that the homology groups of the {\em Dowker complex} $D(R)$ are
isomorphic to the homology groups of the Dowker complex $D(R^T)$ of the
transposed relation $R^T \subseteq Y \times X$ consisting of pairs \((y,x)\) with \((x,y) \in R\).

Before introducing our categorification of the Dowker duality theorem, we give
a short summary of its history.
In \cite[Theorem 10.9]{bjorner} Bj\"orner shows that the simplicial complexes
$D(R)$ and $D(R^T)$ have homotopy equivalent geometric realizations by
constructing an explicit homotopy equivalence $\varphi_R \colon \vert D(R)\vert
\to \vert D(R^T)\vert$.  Given an inclusion $R \subseteq S$ of relations from
\(X\) to \(Y\),  Chowdhury and Mémoli \cite[Theorem 3]{memoli} shows that the
diagram \begin{equation*} \begin{tikzcd} \vert D(R)\vert
\arrow[d]\arrow[r,"{\varphi_R}"]&\vert D(R^T)\vert\arrow[d]\\ \vert D(S)\vert
\arrow[r,"{\varphi_S}"]&\vert D(S^T)\vert  \end{tikzcd} \end{equation*}
commutes up to homotopy, giving a \emph{functorial} Dowker theorem. In
\cite[Theorem 5.2]{Virk} Virk extends this result to morphisms \(f \colon R \to
R'\) of relations \(R \subseteq X \times Y\) and \(R' \subseteq X' \times Y'\)
given by a pair \((f_1,f_2)\) of functions \(f_1 \colon X \to X'\) and \(f_2
\colon Y \to Y'\) such that the image of \(R\) under the function \(f_1 \times
f_2 \colon X \times Y \to X' \times Y'\) is contained in \(R'\). 
Brun and Salbu give an alternative proof of the functorial Dowker theorem in \cite{brunsalbu} by introducing the {\em rectangle complex}
\(E(R)\) of the relation \(R \subseteq X \times Y\).
The assignment \(R \mapsto E(R)\) is
a functor from the category of relations with morphisms of the above form to
the category of simplicial complexes.  The projection \(X \times Y \to X\)
induces a natural map \(E(R) \to D(R)\) and the projection \(X \times Y \to Y\)
induces a natural map \(E(R) \to D(R^T)\). The functorial Dowker theorem is
proven by showing that the geometric realizations of these maps are homotopy
equivalences.

In this paper we consider relations \(R\) from a small category \(\C\)
to a small category \(\D\), that is, functors of the form \(R \colon \R \to \C \times
\D\). Such functors are usually
called a \emph{span}, but guided by the work of Dowker we call them
\emph{relations of categories}. The aim of this paper is to propose a version of Dowker's
Theorem for relations of this form.

Given a relation \(R \colon \R \to \C \times \D\), we introduce the
{\em bisimplicial rectangle nerve \(ER\)}. It is a bisimplicial set whose set
$ER_{m,n}$ of \((m,n)\)-simplices consists of functors of the form \(r \colon
[m] \times [n] \to \R\) with the property that there exist functors \(a \colon
[m] \to \C\) and \(b \colon [n] \to \D\) such that \(R \circ r = a \times b\). Here $[m]$ is the totally ordered set $\{0<1<\dots<m\}$ considered as a category.
If such functors \(a\) and \(b\) exist, they are uniquely determined. This
implies that there is a map \(\pi_R \colon ER_{m, n} \to N\C_m\), into the
\(m\)-simplices \(N\C_m\) of the nerve of \(\C\), taking \(r\) as above to
\(\pi_R(r) = a\).  In this context, the {\em Dowker nerve \(DR\)} is the
simplicial subset of \(N\C\) with \(m\)-simplices given by the image of the map
\(\pi_R \colon ER_{m,0} \to N\C_m\).
The bisimplicial rectangle nerve 
is our categorification of the rectangle complex of \cite{brunsalbu}.
In Section
\ref{bpdd} we prove our main results. In order to state them we need two definitions from that section.
\begin{thm:dowkerfiber}
    Given \(a \in N\C_m\), that is, a functor \(a \colon [m] \to \C\), the {\em
    fiber \(\pi_R^a\) of \(a\) under \(\pi_R\)} is the simplicial subset of
    the simplicial set \([n] \mapsto ER_{m, n}\) consisting of functors \(r
    \colon [m] \times [n] \to \R\) such that there exist a functor \(b \colon
    [n] \to \D\) with \(R \circ r = a \times b\).
\end{thm:dowkerfiber}
\begin{thm:dowkerrelation}
    A {\em Dowker relation} is a relation \(R \colon \R \to \C \times \D\)
    with the property that for every \(a \in N\C_m\), the fiber \(\pi_R^a\) of
    \(a\) under \(\pi_R\) is contractible or empty.
\end{thm:dowkerrelation}
The {\em transpose relation \(R^T\)} of a relation \(R \colon \R \to \C \times
\D\) is the composite \(\tw \circ R \colon \R \to \D \times \C\)
of \(R\) and the twist isomorphism \(\tw \colon \C \times \D \to \D \times
\C\).
There is an isomorphism \(\tw^* \colon ER_{m,n} \to ER^T_{n,m}\)
taking \(r \colon [m] \times [n] \to \R\) to the composition \(r \circ \tw\) of
\(\tw \colon [n] \times [m] \to [m] \times [n]\) and \(r\).
The diagonal simplicial set \(d(ER)\) of \(ER\) is the simplicial set with
\(n\)-simplices given by the set \(d(ER)_n = ER_{n,n}\).
\begin{thm:dowkerduality}[Dowker Equivalence]
    If \(R\) is a Dowker relation, then the projection maps
    \[\pi_{R} \colon ER_{m,n} \to DR_m\]
    induce a weak equivalence \(\od(\pi_{R}) \colon
    d(ER) \to DR\) of simplicial sets.
\end{thm:dowkerduality}
In Section \ref{bsrc} we introduce morphisms of relations.
The following is our version of Dowker's duality theorem:
\begin{thm:dowkerequivalence}[Dowker Duality]
Given a morphism $f:R\to R'$ of relations of categories, there is a commutative diagram of the form
\begin{equation*}
    \begin{tikzcd}
    DR\arrow[d,"Df",swap]&\arrow[l,"\od(\pi_{R})",swap]\od(ER)\arrow[d,"\od(Ef)"]\arrow[r,"\od(\tw^*)"]&\od(ER^T)\arrow[d,"\od(Ef^T)"]\arrow[r,"\od(\pi_{R^T})"]&DR^T\arrow[d,"Df^T"]\\
    DR'&\arrow[l,"\od(\pi_{R'})",swap]\od(ER')\arrow[r,"\od(\tw^*)"]&\od(ER'^T)\arrow[r,"\od(\pi_{R'^T})"]&DR'^T.
    \end{tikzcd}
\end{equation*}
    If the relations \(R, R^T, R'\) and \({R'}^T\) are Dowker relations, then all
    horizontal maps in this diagram are weak equivalences of simplicial sets.
\end{thm:dowkerequivalence}

We end the paper with two applications of Theorems \ref{thm:dowkerduality} and \ref{thm:dowkerequivalence}.
Given functors of the form \(F \colon \C \to \A\) and \(G \colon \D \to \A\),
the projection \(R \colon F
\downarrow G \to \C \times \D\)  taking an object \((c,d,f)\) of the comma
category \(F \downarrow G\) to \((c,d)\) is a relation. We show that if the
nerve of the
category \(F
\downarrow d\) is contractible for every object \(d\) of \(\D\),
then \(R\) is a Dowker relation. Using this we basically recover Quillen's original proof of his Theorem A \cite[Theorem A]{quillen}.

As a second application we show that 
the geometric realization of a simplicial complex
\(K\) is naturally homotopy equivalent to the geometric realization of 
the \emph{singular complex} \(\sing(K)\), a simplicial set defined as follows:
Let \(V\) be the vertex set of \(K\).
The set of \(m\)-simplices of \(\sing(K)\) is the set of functions
\(\{0,1,\dots,m\} \to V\) whose image is a simplex in \(K\). 

The assignment \(K \mapsto \sing(K)\) is a functor from the category of simplicial complexes to the category of
simplicial sets. The geometric realization of \(\sing(K)\) is much 
bigger than the geometric realization of \(K\). There are other smaller simplicial sets that capture the homotopy type of the geometric realization of \(K\). One example is the nerve $N(K_\subseteq)$ of the category \(K_{\subseteq}\) given by \(K\) considered as a partially ordered set under inclusion. The assignment $K\mapsto N(K_\subseteq)$ is also a functor, with the convenient property that the geometric realizations of $K$ and $N(K_\subseteq)$ are naturally homeomorphic. This functor has neither a left- nor a right adjoint functor. In contrast, the singular complex \(K \mapsto \sing(K)\) has a left adjoint functor.

The fact that the geometric realizations of $K$ and  $\sing(K)$ are
homotopy equivalent is a well-known fact in topology, but to the best of our knowledge it has not yet been published in a peer-reviewed paper. Two proofs of this fact have been published on the personal web page of Omar Antolín Camarena \cite{omar}, but the naturality of the homotopy equivalence is lacking as both proofs use a chosen order on the vertex set of $K$. 


The paper is organized as follows:
In Section \ref{bsrc} we give preliminary definitions concerning (bi)simplicial sets and we introduce relations of categories. In Section \ref{sec:} we define the bisimplicial rectangle nerve, which in Section \ref{bpdd} we use to prove our Dowker Equivalence and Dowker Duality theorems. In Section \ref{sec:homotopies} we look at homotopies of Dowker nerves, and finally, in Section \ref{sec:appl} we present the two applications of our main results presented in the two preceding paragraphs.

\section{Bisimplicial Sets and Relations of Categories}\label{bsrc}
In this section we recall the definition of simplicial and bisimplicial sets (for details we refer to \cite{GJ}),
and introduce the concept of a relation of categories.

Let $[n]$ be the category with object set $\{0,1,...,n\}$ and a unique morphism $i\to j$ if $i\leq j$. Note that a functor $[m]\to[n]$ is the same as an order-preserving map from the set \(\{0,1,...,m\}\) to the set \(\{0,1,...,n\}\).
The object set of the \emph{simplex category} $\Delta$ consists of the
categories $[n]$, for $n\geq 0$. Morphisms in \(\Delta\) are functors between these
categories.

Consider the interval $[0,1]$ as a subspace of $\RR$. Given an integer \(n \ge 0\), the \emph{geometric \(n\)-simplex} is the subspace 
\(\Delta^n\) of \([0,1]^{n+1}\) consisting of tuples \(t = (t_0,\dots,t_n)\) with sum equal to \(1\). Denoting the standard basis for
\(\RR^{n+1}\) by \(e_0, \dots, e_n\), we may write \(t = t_0e_0 + \dots + t_n
e_n\).
Let $\Top$ denote the category of topological spaces. There is a functor \(\Delta \to \Top\),
\([n] \mapsto \Delta^n\) taking an order-preserving map \(f \colon [m] \to [n]\)
to the affine map \(f_* \colon \Delta^m \to \Delta^n\) with \(f_*(e_i) =
e_{f(i)}\)
for \(i = 0,1,\dots,m\).

A \emph{simplicial set} is a functor $Y:\Delta\op\to\set$, from the opposite
category of the simplex category to the category of sets, 
sending $[n]$ to the set $Y_n$ of $n$-simplices. Morphisms in the category of
simplicial sets, called \emph{simplicial maps}, are natural transformation of
functors \(\Delta\op\to\set\). We write $\sset$ for the category of simplicial
sets and simplicial maps.

The {\em geometric realization \(|Y|\)} of a simplicial set \(Y\) is the 
topological space given by the coequalizer diagram
\begin{displaymath}
    \coprod_{[m], [n]} Y_n \times \Delta([m],[n]) \times \Delta^m \rightrightarrows
    \coprod_{[n]} Y_n \times \Delta^n
    \rightarrow |Y|,
\end{displaymath}
where the two parallel horizontal maps take \((y,f,t) \in Y_n \times
\Delta([m],[n]) \times \Delta^m\) to \((Y(f)y, t)\) and \((y, f_*(t))\)
respectively.
A simplicial map $\phi:Y\to Y'$ is called a
\emph{weak equivalence} if the induced map $\vert\phi\vert:\vert Y\vert\to\vert
Y'\vert$ on geometric realization (see \cite[I.2]{GJ}) is a homotopy
equivalence.

A \emph{bisimplicial set} is a functor $X:\Delta\op\times\Delta\op\to\set$, sending the tuple $([m],[n])$ to the set $X_{m,n}$ of $(m,n)$-simplices.
As with simplicial sets, \emph{bisimplicial maps} $\phi:X\to X'$ are natural
transformations.
We write $\ssset$ for the category of bisimplicial sets and bisimplicial maps.


A \emph{relation (of categories)} from a small category \(\C\) to a small category \(\D\) is a functor of the form \(R \colon \R \to \C \times \D\).
A \emph{morphism of relations of categories} \(f \colon R \to R'\) from \(R
\colon \R \to \C \times \D\) to \(R' \colon \R' \to \C' \times \D'\) consists
of functors \(f_0 \colon \R \to \R'\), \(f_1 \colon \C \to \C'\) and
\(f_2 \colon \D \to \D'\) so that \((f_1 \times f_2) \circ R = R' \circ f_0\). 
We write $\rel$ for the category of relations of categories.

Given two categories $\C$ and $\D$ the \emph{twist isomorphism} 
\begin{equation*}
    \tw \colon \C \times \D \to \D \times \C
\end{equation*}
is the functor sending objects $(c,d)$ to $(d,c)$, and morphisms
$(\gamma,\delta)$ to $(\delta,\gamma)$. For a relation of categories \(R \colon
\R \to \C \times \D\), its \emph{transposed relation} \(R^T \colon \R \to \D
\times \C\) is the composition \(R^T = R \circ \tw\) of \(R\) and the twist isomorphism
\(\tw\). The \emph{transposition functor} $T:\rel\to\rel$ is the
functor \(R \mapsto R^T\).

\section{The Bisimplicial Rectangle Nerve}\label{sec:}
In this section we introduce the bisimplicial rectangle nerve of a relation. This is a bisimplicial set that is in a sense symmetric under transposition.

\begin{definition}
Let \(R \colon \R \to \C \times \D\) be a relation.
The \emph{bisimplicial rectangle nerve} $ER$ is the bisimplicial set whose $(m,n)$-simplices are 
functors \(r \colon [m] \times [n] \to \R\) such that there exist a necessarily unique
pair of functors $(a:[m]\to \C, b:[n]\to \D)$ with $a\times b = R\circ r$, that is, so that the following diagram commutes
\begin{equation*}
    \begin{tikzcd}
        {[m]\times[n]}\arrow[d,"r",swap]\arrow[dr,"a\times b"]&\\
        \R\arrow[r,"R"]&\C\times\D.
    \end{tikzcd}
\end{equation*}
If $\alpha:[m']\to[m]$ and $\beta:[n']\to[n]$ are order-preserving maps, then $ER(\alpha,\beta):ER_{m,n}\to ER_{m',n'}$ sends the $(m,n)$-simplex $r$ to the $(m',n')$-simplex \(r \circ (\alpha \times \beta)\). 
\end{definition}

A simplex \(r \colon [m] \times [n] \to \R\)
in the bisimplicial rectangle nerve can then be considered as a lift of a map of
rectangles \(a \times b
\colon [m] \times [n] \to \C \times \D\) to \(\R\).
This is the motivation for the name ``bisimplicial 
rectangle nerve''.

Let \(f \colon R \to R'\) be a morphism of relations given by
relations
\(R \colon \R \to \C \times \D\)
and 
\(R' \colon \R' \to \C' \times \D'\),
and functors 
\(f_0 \colon \R \to \R'\),
\(f_1 \colon \C \to \C'\) and
\(f_2 \colon \D \to \D'\).
There is a bisimplicial map 
\(Ef \colon ER \to ER'\) taking an \((m,n)\)-simplex \(r\) of \(ER\) to the \((m,n)\)-simplex
\(f_0 \circ r\) of \(ER'\).
It is straightforward to check that the assignment \(f \mapsto Ef\) gives us
a functor $E:\rel\to\ssset$.

Pre-composition with the twist isomoprhism 
\(\tw:\Delta\op\times\Delta\op\to\Delta\op\times\Delta\op\)
gives a functor $\tau:\ssset\to\ssset$.
Specifically, for $X\in\ssset$ the bisimplicial set
$\tau X$ is the composite functor
\begin{equation*}
    \begin{tikzcd}
        \Delta\op\times\Delta\op\arrow[r,"\tw"]&\Delta\op\times\Delta\op\arrow[r,"X"]&\set,
    \end{tikzcd}
\end{equation*}
and $\tau X_{m,n}=X_{n,m}$. 

If $R$ is a relation, then $r:[m]\times[n]\to\R$ is a simplex in $ER_{m,n}=\tau ER_{n,m}$ if and
only if the composition
\begin{equation*}
    \begin{tikzcd}
        {[n]\times[m]}\arrow[r,"\tw"]&{[m]\times[n]}\arrow[r,"r"]&\R
    \end{tikzcd}
\end{equation*}
is in $ER^T_{n,m}$. 
These maps give us a bijective bisimplicial map \(\tw^* \colon \tau ER \to ER^T\) taking 
\(r \in ER_{m,n}\) to \(r \circ \tw\).
It is natural in the sense that
given a morphism $f:R\to R'$, we have a commutative diagram
\begin{equation*}
    \begin{tikzcd}
        \tau ER\arrow[r,"\tw^*"]\arrow[d,"\tau Ef"]&ER^T\arrow[d,"Ef^T"]\\
        \tau ER'\arrow[r,"\tw^*"]&ER'^T. 
    \end{tikzcd}
\end{equation*}
We sum up this discussion in the following lemma:

\begin{lemma}\label{iso}
The map $\tw^* \colon \tau E\to ET$ is a natural isomorphism.\qed
\end{lemma}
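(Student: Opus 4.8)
The statement packages the isomorphisms $\tw^*_R\colon \tau ER \to ER^T$ constructed in the preceding discussion into a single natural isomorphism of functors $\tau E, ET\colon \rel\to\ssset$. Accordingly, the plan is to verify three things in turn: that for each relation $R$ the map $\tw^*_R$ is a well-defined bijection in each bidegree, that it is compatible with the bisimplicial structure maps, and that it is natural in $R$. Throughout, the only genuine input is the twist isomorphism of product categories $\tw\colon [n]\times[m]\to[m]\times[n]$ together with the fact that transposition of a relation is realised by postcomposition with the codomain twist $\tw\colon\C\times\D\to\D\times\C$; every step reduces to manipulating these two kinds of twist.

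\emph{Well-definedness and bijectivity.} Fix $R\colon\R\to\C\times\D$ and a simplex $r\in(\tau ER)_{n,m}=ER_{m,n}$, so that $R\circ r=a\times b$ for a unique pair $(a\colon[m]\to\C,\ b\colon[n]\to\D)$. I would first compute
\[
R^T\circ(r\circ\tw)=\tw\circ R\circ r\circ\tw=\tw\circ(a\times b)\circ\tw=b\times a,
\]
which exhibits $r\circ\tw$ as a simplex of $ER^T_{n,m}$ with associated pair $(b,a)$; this is exactly the ``if and only if'' recorded above, and the uniqueness of $(a,b)$ transfers to uniqueness of $(b,a)$. Since $\tw\circ\tw=\id$, precomposition with the inverse twist $\tw\colon[m]\times[n]\to[n]\times[m]$ carries $ER^T_{n,m}$ back into $ER_{m,n}$ and is a two-sided inverse, so $\tw^*_R$ is a bijection in each bidegree, hence a bisimplicial isomorphism once the next point is checked.

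\emph{Bisimpliciality and naturality.} Both remaining points follow from naturality of the categorical twist, namely $\tw\circ(\gamma\times\delta)=(\delta\times\gamma)\circ\tw$ for order-preserving maps $\gamma,\delta$. For a structure map indexed by $(\gamma,\delta)$ the two ways around the naturality square reduce to this identity, because the functor $\tau$ interchanges the two simplicial directions precisely so as to match the $(\gamma,\delta)\mapsto(\delta,\gamma)$ swap appearing on the $ER^T$ side; hence $\tw^*_R$ commutes with faces and degeneracies. For naturality in a morphism $f\colon R\to R'$ with underlying functor $f_0\colon\R\to\R'$, both composites in the square send $r$ to $f_0\circ r\circ\tw$: precomposition with $\tw$ acts on the source and postcomposition with $f_0$ on the target, and these two operations commute, giving the required commutativity.

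\emph{Main obstacle.} There is no substantive difficulty; the argument is entirely formal, which is why the lemma merely ``sums up'' the discussion. The one point demanding care is notational discipline — keeping the three distinct twists apart (the twist of categories $[n]\times[m]\to[m]\times[n]$, the codomain twist $\C\times\D\to\D\times\C$, and the functor $\tau$ on $\ssset$) and respecting the index convention $(\tau ER)_{n,m}=ER_{m,n}$. Once these are fixed, the computations above are immediate and the lemma follows.
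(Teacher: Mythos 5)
Your proposal is correct and follows essentially the same route as the paper, which proves the lemma by the same preceding discussion: the computation $R^T\circ(r\circ\tw)=b\times a$ showing $r\in ER_{m,n}$ if and only if $r\circ\tw\in ER^T_{n,m}$, bijectivity via $\tw\circ\tw=\id$, and naturality in $f$ since pre-composition with $\tw$ commutes with post-composition with $f_0$. You merely spell out the bisimpliciality check (the identity $\tw\circ(\gamma\times\delta)=(\delta\times\gamma)\circ\tw$) that the paper leaves implicit, which is why it records the lemma with no separate proof.
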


Next, we consider the \emph{diagonal functor}
\begin{equation*}
    \oD:\Delta\op\to\Delta\op\times\Delta\op
\end{equation*}
where $\oD([n])=([n],[n])$ on objects and $\oD(\alpha)=(\alpha,\alpha)$ on morphisms \cite[p.197]{GJ}. Pre-composing with $\oD$ gives a functor $\od:\ssset\to\sset$ sending a bisimplicial set $X$ to its \emph{diagonal simplicial set} $\od(X) := X\circ \oD$ whose $n$-simplices $\od(X)_n$ are $X_{n,n}$.

Note that the diagram 
\begin{equation*}
    \begin{tikzcd}
        \Delta\op\arrow[r,"\oD"]\arrow[dr,"\oD",swap]&\Delta\op\times\Delta\op\arrow[d,"\tw"]\\
        &\Delta\op\times\Delta\op
    \end{tikzcd}
\end{equation*}
commutes. Since the diagonal functor $\od$ and the functor $\tau$ are defined by pre-composition of $\oD$ and $\tw$ respectively, we have:
\begin{lemma}\label{diagequal}
If $R$ is a relation of categories, then $\od(ER)=\od(\tau ER)$.\qed
\end{lemma}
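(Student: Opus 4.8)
The plan is to reduce the claimed equality of simplicial sets to a single identity of functors on the indexing categories. Recall that \(\od\) is defined by precomposition with the diagonal functor \(\oD \colon \Delta\op \to \Delta\op \times \Delta\op\), and that \(\tau\) is defined by precomposition with the twist \(\tw \colon \Delta\op \times \Delta\op \to \Delta\op \times \Delta\op\). Writing everything out, \(\od(ER) = ER \circ \oD\), whereas \(\od(\tau ER) = (\tau ER) \circ \oD = (ER \circ \tw) \circ \oD\). By associativity of composition of functors, the latter equals \(ER \circ (\tw \circ \oD)\). Hence it suffices to prove the identity \(\tw \circ \oD = \oD\) of functors \(\Delta\op \to \Delta\op \times \Delta\op\), which is exactly the commuting triangle displayed immediately before the lemma.

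Next I would verify \(\tw \circ \oD = \oD\) directly, on objects and on morphisms, since the diagonal lands in the locus fixed by the twist. On an object \([n]\) we have \(\tw(\oD[n]) = \tw([n],[n]) = ([n],[n]) = \oD[n]\), and on a morphism \(\alpha\) we have \(\tw(\oD\alpha) = \tw(\alpha,\alpha) = (\alpha,\alpha) = \oD\alpha\). These are literal equalities, so the two functors agree. Composing with \(ER\) on the left then yields \(\od(\tau ER) = ER \circ (\tw \circ \oD) = ER \circ \oD = \od(ER)\), which is the claim.

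I do not expect a genuine obstacle here: the statement is a formal consequence of how \(\od\) and \(\tau\) are defined, together with the fixed-point property of the diagonal under the twist. The only point worth emphasizing is that this is an equality of simplicial sets on the nose, not merely a natural isomorphism — this is precisely what the fixed-point identity \(\tw \circ \oD = \oD\) buys us, and it is what is needed in order to identify \(\od(ER)\) with \(\od(\tau ER)\) rather than with a merely twisted copy. I would also note that \(ER\) plays no special role in the argument, so the same reasoning shows \(\od \circ \tau = \od\) as functors \(\ssset \to \sset\).
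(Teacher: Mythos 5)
Your proof is correct and is essentially the paper's own argument: the paper derives the lemma from the commuting triangle \(\tw \circ \oD = \oD\) displayed just before the statement, together with the observation that \(\od\) and \(\tau\) are defined by precomposition with \(\oD\) and \(\tw\) respectively. Your explicit check on objects and morphisms, and your closing remark that \(ER\) is irrelevant so that \(\od \circ \tau = \od\) as functors \(\ssset \to \sset\), simply spell out details the paper leaves implicit.
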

Combining Lemma \ref{diagequal} with Lemma \ref{iso} we get:
\begin{lemma}\label{gamma}
    The map \(\tw^* \colon \tau E \to ET\) induces a natural isomorphism
    \(\od(\tw^*) \colon \od E \to \od ET\).
\end{lemma}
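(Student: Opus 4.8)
The plan is to produce $\od(\tw^*)$ by applying the diagonal functor $\od$ componentwise to the natural isomorphism $\tw^*$ of Lemma \ref{iso}, and then to rewrite its source using Lemma \ref{diagequal}. The transformation $\tw^* \colon \tau E \to ET$ is a natural isomorphism of functors $\rel \to \ssset$ whose component at a relation $R$ is the bisimplicial isomorphism $\tw^*_R \colon \tau ER \to ER^T$. Since $\od \colon \ssset \to \sset$ is a functor, applying it to each component gives simplicial maps $\od(\tw^*_R) \colon \od(\tau ER) \to \od(ER^T)$, and these are the components of a natural transformation $\od \tau E \to \od ET$: naturality of $\od(\tw^*)$ follows from naturality of $\tw^*$ together with the functoriality of $\od$.

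To match this with the claimed statement I would next identify the source functor. From the commuting triangle $\tw \circ \oD = \oD$ recorded just before Lemma \ref{diagequal}, every bisimplicial set $X$ satisfies $\od(\tau X) = X \circ \tw \circ \oD = X \circ \oD = \od(X)$, and the same identity holds on bisimplicial maps, so $\od \circ \tau = \od$ as functors $\ssset \to \sset$. Precomposing with $E$ yields the functorial equality $\od \tau E = \od E$, which is exactly Lemma \ref{diagequal} read naturally in $R$. Under this identification the source of $\od(\tw^*)$ is $\od E$, and the transformation becomes $\od(\tw^*) \colon \od E \to \od ET$.

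It then remains only to note that $\od(\tw^*)$ is an isomorphism: each component $\od(\tw^*_R)$ is the image under the functor $\od$ of the isomorphism $\tw^*_R$, hence itself an isomorphism, while naturality was already established. The single point demanding care -- and the nearest thing to an obstacle in an otherwise formal argument -- is to verify that the identification $\od \circ \tau = \od$ is a strict equality of functors rather than merely a natural isomorphism, so that the source of $\od(\tw^*)$ is literally $\od E$. This strictness is precisely what the on-the-nose commutativity of the triangle built from $\oD$ and $\tw$ provides, and everything else is a formal consequence of functoriality.
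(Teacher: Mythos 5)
Your proof is correct and follows the paper's own route exactly: the paper obtains Lemma \ref{gamma} by combining the natural isomorphism \(\tw^* \colon \tau E \to ET\) of Lemma \ref{iso} with the strict equality \(\od(\tau ER) = \od(ER)\) of Lemma \ref{diagequal}, which is precisely your argument. Your extra remark that the identification \(\od \circ \tau = \od\) holds on the nose (via \(\tw \circ \oD = \oD\)), so the source is literally \(\od E\), is a careful spelling-out of what the paper leaves implicit.
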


\section{The Functorial Dowker Duality Theorem}\label{bpdd}

In this section we finally introduce the Dowker nerve of a relation of
categories, and we state our version of the functorial Dowker duality theorem.
We have defined the bisimplicial rectangle nerve by
\begin{equation*}
    ER_{m,n}=\left\{r \colon [m] \times [n] \to \R \, \middle \vert \,
    \text{\(R \circ r\) is of the form \(a \times b \colon [m] \times [n] \to
\C \times \D\)} \right\}.
\end{equation*}
The \emph{nerve} of a small category $\C$ is the simplicial
set $\Ns\C$ whose $m$-simplices are functors from $[m]$ to $\C$, that is,
$\Ns\C_m=\cat([m],\C)$.

Note that given \(r \in ER_{m,n}\) with \(R \circ r = a \times b \colon [m] 
\times [n] \to \C \times \D\), the functors \(a \colon [m] \to \C\) and $b:[n]\to\D$ are 
uniquely determined by the universal property of products. In particular, there is a function \(\pi_R \colon ER_{m,n} \to N\C_m\)
given by \(\pi_R(r) = a\) for \(a \in N\C_m\) with \(R \circ r = a \times b \colon [m] \times [n] \to \C \times \D\).
\begin{definition}
    \label{thm:dowkerfiber}
    Given \(a \in N\C_m\), that is, a functor \(a \colon [m] \to \C\), the {\em
    fiber \(\pi_R^a\) of \(a\) under \(\pi_R\)} is the simplicial subset of
    the simplicial set \([n] \mapsto ER_{m, n}\) consisting of functors \(r
    \colon [m] \times [n] \to \R\) such that there exist a functor \(b \colon
    [n] \to \D\) with \(R \circ r = a \times b\).
\end{definition}

In order to state our version of Dowker duality we introduce the concept of a Dowker relation.
\begin{definition}
    \label{thm:dowkerrelation}
    A {\em Dowker relation} is a relation \(R \colon \R \to \C \times \D\)
    with the property that for every \(a \in N\C_m\), the fiber \(\pi_R^a\) of
    \(a\) under \(\pi_R\) is contractible or empty.
\end{definition}
In Section \ref{sec:appl} we look at concrete Dowker relations, one class of which is described in Corollary \ref{cor: initial terminal}.

\begin{definition}
    The {\em Dowker nerve} of the relation $R \colon \R\to \C\times \D$  is
    the simplicial set $DR$ whose set of $m$-simplices
    \(DR_m\) is the image of the map \(\pi_R \colon ER_{m,0} \to N\C_m\).
\end{definition}

Let \(f \colon R \to R'\) be a morphism of relations
given by
relations
\(R \colon \R \to \C \times \D\)
and 
\(R' \colon \R' \to \C' \times \D'\),
and functors 
\(f_0 \colon \R \to \R'\),
\(f_1 \colon \C \to \C'\) and
\(f_2 \colon \D \to \D'\). 
The assignment \(a \mapsto Df(a) = f_1\circ a\)
defines a simplicial map \(Df \colon DR \to DR'\), so we have a functor
$D:\rel\to\sset$.
\begin{remark}
    The $m$-simplices \(DR_m\) contain the image of \(\pi_R \colon ER_{m,n}
    \to N\C_m\) for all \(n\geq 0\).  We also write \(\pi_R \colon ER_{m,n} \to
    DR_m\) for the map \(\pi_R\) with \(DR_m\) as codomain instead of
    \(N\C_m\). 
    Fixing \(m\) we obtain the simplicial sets $X$ and $Y$ where \(X_n:=ER_{m,n}\) and \(Y_n:=DR_m\) as a constant simplicial set.
    Since the connected components\footnote{The connected components of the simplicial set $X$ are the graph-components of the multigraph $X_1\rightrightarrows X_0$.} of a constant simplicial set are given by
    degenercies of zero simplices, the relation \(R\) is a Dowker relation if
    and only if the simplicial map $\pi:X\to Y$, which on $n$-simplices is 
    \(\pi_R \colon ER_{m,n} \to DR_m\),
    is a weak equivalence.
\end{remark}

\begin{theorem}[Dowker Equivalence]\label{Dowker Duality Intro}
    \label{thm:dowkerduality}
    If \(R\) is a Dowker relation, then the projection maps
    \[\pi_{R} \colon ER_{m,n} \to DR_m\]
    induce a weak equivalence \(\od(\pi_{R}) \colon
    \od(ER) \to DR\) of simplicial sets.
\end{theorem}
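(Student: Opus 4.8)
The plan is to prove this by a standard bisimplicial-set technique: compare $d(ER)$ and $DR$ by routing through the intermediate bisimplicial data and applying a Bousfield--Kan / realization-of-levelwise-equivalences argument. The key observation is already packaged in the Remark: for each fixed $m$, the map $\pi_R \colon ER_{m,\bullet} \to (DR_m)_{\text{const}}$ is a weak equivalence of simplicial sets precisely because $R$ is a Dowker relation. Indeed, the constant simplicial set $(DR_m)_{\text{const}}$ has one connected component per point of $DR_m$, each contractible, and $\pi_R$ sorts the $n$-simplices of $ER_{m,\bullet}$ into the fibers $\pi_R^a$ over the various $a \in DR_m$; the Dowker condition says each nonempty fiber is contractible, so $\pi_R$ is a levelwise-on-$m$ weak equivalence of simplicial sets in the $n$-direction.

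First I would assemble these maps into a map of bisimplicial sets. Regard $ER$ as a bisimplicial set with $(m,n)$-simplices $ER_{m,n}$, and regard $DR$ as a bisimplicial set that is constant in the $n$-direction, i.e.\ with $(m,n)$-simplices equal to $DR_m$. The projections $\pi_R \colon ER_{m,n} \to DR_m$ are natural in both $[m]$ and $[n]$ (naturality in $[n]$ is automatic since the target is $n$-constant, and naturality in $[m]$ follows from the uniqueness of the factorization $R\circ r = a\times b$), so they constitute a bisimplicial map $\pi_R \colon ER \to DR^{\flat}$, where $DR^{\flat}$ denotes this $n$-constant bisimplicial set. Taking diagonals, $d(\pi_R)\colon d(ER)\to d(DR^{\flat})$ is a simplicial map, and since $DR^{\flat}$ is $n$-constant its diagonal is just $DR$ itself; this identifies $d(\pi_R)$ with the map $d(\pi_R)\colon d(ER)\to DR$ in the statement.

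Next I would invoke the realization lemma for bisimplicial sets: if a bisimplicial map is a weak equivalence in one simplicial direction for each value of the other index, then the induced map on diagonals is a weak equivalence (see \cite[IV.1.7]{GJ}). By the paragraph above, for each fixed $m$ the map $\pi_R \colon ER_{m,\bullet} \to DR_m$ (constant in $n$) is a weak equivalence of simplicial sets exactly when $R$ is a Dowker relation. Applying the realization lemma in the $n$-direction therefore yields that $d(\pi_R)\colon d(ER)\to DR$ is a weak equivalence, which is the assertion.

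The main obstacle I anticipate is the levelwise claim feeding the realization lemma, namely the precise identification of $ER_{m,\bullet}$ as the disjoint union over $a \in DR_m$ of the fibers $\pi_R^a$, together with the argument that $\pi_R$ is a weak equivalence onto the constant simplicial set $DR_m$. This requires checking that $\pi_R$ restricted to each fiber lands in a single point of $DR_m$ and that, because distinct $a$ give genuinely disjoint (not merely non-isomorphic) fibers, the target really is discrete; the Dowker hypothesis then contributes contractibility of the nonempty pieces. Once this decomposition and the contractibility are in hand, the rest is a formal application of the realization lemma, so the conceptual weight of the proof rests on transferring the Dowker condition into a levelwise weak equivalence.
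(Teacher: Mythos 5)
Your proposal is correct and takes essentially the same route as the paper's proof: the paper likewise regards $DR$ as a bisimplicial set constant in the $n$-direction, notes (in the remark preceding the theorem) that the Dowker condition is exactly the statement that each levelwise map $ER_{m,\bullet}\to DR_m$ is a weak equivalence, and then applies the realization lemma \cite[Prop.~IV.1.7]{GJ} to conclude that the diagonal map is a weak equivalence. Your extra care about the fiberwise decomposition $ER_{m,\bullet}=\coprod_{a\in DR_m}\pi_R^a$ is a fleshed-out version of that same remark, not a different argument.
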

\begin{proof}
    Fixing $m\geq 0$, let \(A\) be the simplicial set with \(A_n = ER_{m,n}\), and consider \(DR_m\) as a constant simplicial set. By our assumption on \(R\) the simplicial map $A\to DR_m$, which on $n$-simplices is \(\pi_R \colon ER_{m,n} \to DR_m\),
    is a weak equivalence.
    Let $B$ be the bisimplicial set $B_{m,n}:= DR_m$ constant in one direction and consider \(\pi_R \colon ER_{m,n} \to DR_m\) as a bisimplicial map \(\pi_R \colon ER \to B\). 
    By \cite[Prop.
    IV.1.7]{GJ}, attributed to Tornehave in \cite{quillen}, the projection map $\pi_R$ induces a weak equivalence of
    diagonals. That is, the map $\od(\pi_R):\od(ER)\to d(B)=DR$ is a weak equivalence.
\end{proof}

We now use Theorem \ref{thm:dowkerduality} to prove a functorial Dowker duality theorem for relations of categories. Consider the \emph{left projection functor}
\begin{equation*}
    \oP:\Delta\op\times\Delta\op\to\Delta\op
\end{equation*}
sending objects $([m],[n])$ to $[m]$ and morphisms $(\alpha,\beta)$ to $\alpha$. We consider the functor $\p:\sset\to\ssset$, sending a simplicial set $Y:\Delta\op\to\set$ to the composite functor
\begin{equation*}
    \begin{tikzcd}
        \Delta\op\times\Delta\op\arrow[r,"\oP"]&\Delta\op\arrow[r,"Y"]&\set.
    \end{tikzcd}
\end{equation*}
Note that $\oP\circ\oD$ is the identity, so we have $\od(\p\,(Y))=Y$.

There is a natural transformation $\pi:E\to \p  D$, 
so that $\pi_R:ER\to \p  DR$ 
takes \(r \colon [m]\times [n] \to \R\) with \(R \circ r = a \times b\) to \(a\).
This means that for each morphism of relations $f:R\to R'$ we have a commutative square
\begin{equation}\label{nat.trans}
    \begin{tikzcd}
    ER\arrow[d,"Ef",swap]\arrow[r,"\pi_R"]&\p  DR\arrow[d,"\p  Df"]\\
    ER'\arrow[r, "\pi_{R'}"]&\p  DR'.
    \end{tikzcd}
\end{equation}
For the transposed morphism $f^T:R^T\to R'^T$, we get a commutative square 
\begin{equation}\label{nat.trans2}
    \begin{tikzcd}
    ER^T\arrow[d,"Ef^T",swap]\arrow[r,"\pi_{R^T}"]&\p  DR^T\arrow[d,"\p  Df^T"]\\
    ER'^T\arrow[r, "\pi_{R'^T}"]&\p  DR'^T.
    \end{tikzcd}
\end{equation}
In this way, we may regard $\pi$ as a natural transformation $\pi \colon ET\to \p  DT$ as well.
\begin{theorem}[Dowker Duality]
    \label{thm:dowkerequivalence}
Given a morphism $f:R\to R'$ of relations of categories, there is a commutative diagram of the form
\begin{equation*}
    \begin{tikzcd}
    DR\arrow[d,"Df",swap]&\arrow[l,"\od(\pi_{R})",swap]\od(ER)\arrow[d,"\od(Ef)"]\arrow[r,"\od(\tw^*)"]&\od(ER^T)\arrow[d,"\od(Ef^T)"]\arrow[r,"\od(\pi_{R^T})"]&DR^T\arrow[d,"Df^T"]\\
    DR'&\arrow[l,"\od(\pi_{R'})",swap]\od(ER')\arrow[r,"\od(\tw^*)"]&\od(ER'^T)\arrow[r,"\od(\pi_{R'^T})"]&DR'^T.
    \end{tikzcd}
\end{equation*}
    If the relations \(R, R^T, R'\) and \({R'}^T\) are Dowker relations, then all
    horizontal maps in the diagram are weak equivalences of simplicial sets.
\end{theorem}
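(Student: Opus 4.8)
The plan is to reduce the entire statement to results already established: the three squares will commute because they are the images under the diagonal functor $\od$ of naturality squares we have already displayed, and the horizontal maps will be weak equivalences by combining Theorem \ref{thm:dowkerduality} (for the projection maps $\od(\pi_{-})$) with Lemma \ref{gamma} (for the twist maps $\od(\tw^*)$). No new homotopy-theoretic input is needed beyond these; the work is purely in assembling the pieces and checking that the functors $\od$, $\p$ and $\tau$ interact as required.

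First I would treat commutativity, square by square. For the left square I would apply $\od$ to the naturality square \eqref{nat.trans} for $\pi \colon E \to \p D$, and then invoke the identity $\od \circ \p = \id_{\sset}$, which holds because $\oP \circ \oD$ is the identity on $\Delta\op$; this identifies $\od(\p DR)$ with $DR$ and $\od(\p Df)$ with $Df$, turning the image of \eqref{nat.trans} into exactly the left square. The right square is handled identically by applying $\od$ to the transposed naturality square \eqref{nat.trans2}. The middle square is the naturality square of the natural isomorphism $\od(\tw^*) \colon \od E \to \od ET$ from Lemma \ref{gamma}: here I would note that its right-hand vertical map $\od(E(Tf))$ is $\od(Ef^T)$ by definition of the transposition functor, and that its left-hand vertical map $\od(\tau Ef)$ equals $\od(Ef)$, the latter being a consequence of Lemma \ref{diagequal} together with the commuting triangle $\tw \circ \oD = \oD$.

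Next I would verify that each of the six horizontal maps is a weak equivalence under the stated hypotheses. The two twist maps $\od(\tw^*) \colon \od(ER) \to \od(ER^T)$ and $\od(\tw^*) \colon \od(ER') \to \od(ER'^T)$ are isomorphisms by Lemma \ref{gamma}, hence weak equivalences, and this needs no hypothesis on the relations. The four projection maps $\od(\pi_R)$, $\od(\pi_{R'})$, $\od(\pi_{R^T})$ and $\od(\pi_{R'^T})$ are weak equivalences by applying Theorem \ref{thm:dowkerduality} to the four relations $R$, $R'$, $R^T$ and $R'^T$ respectively, each of which is assumed to be a Dowker relation. This exhausts the horizontal maps and completes the argument.

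I do not expect a genuine obstacle here, since the conceptual content is entirely absorbed into Theorem \ref{thm:dowkerduality} (whose proof in turn rests on the Tornehave/Goerss--Jardine statement that a levelwise weak equivalence of bisimplicial sets induces a weak equivalence of diagonals). The only point demanding care is the bookkeeping around $\od$, $\p$ and $\tau$: I must confirm the identities $\od \circ \p = \id$ and $\od \circ \tau = \od$ so that applying $\od$ to the previously displayed naturality squares yields \emph{exactly} the squares drawn in the statement, rather than squares merely isomorphic to them. Once those identifications are recorded, the theorem follows formally.
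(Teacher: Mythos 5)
Your proposal is correct and follows essentially the same route as the paper's proof: applying \(\od\) to the naturality squares \eqref{nat.trans} and \eqref{nat.trans2} (using \(\od\circ\p=\id\)) together with Lemma \ref{gamma} for commutativity, then deducing the weak equivalences from Lemma \ref{gamma} for the twist maps and Theorem \ref{thm:dowkerduality} for the four projections. Your extra bookkeeping (e.g.\ checking \(\od\circ\tau=\od\) via Lemma \ref{diagequal}) only makes explicit what the paper leaves implicit.
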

\begin{proof}
Applying the diagonal $\od$ to the commutative squares \eqref{nat.trans} and \eqref{nat.trans2}, together with Lemma \ref{gamma}, we see the diagram commutes.
By Lemma \ref{gamma}, the maps labeled \(\od(\tw^*)\) are isomorphisms.
The statement about weak equivalences is a now consequence of Theorem
\ref{thm:dowkerduality}.
\end{proof}

\section{Dowker Nerves and Homotopies}\label{sec:homotopies}
We look at mophisms of relations of categories that induce homotopies when taking the Dowker nerve.

To talk about homotopies, we need to define the product of relations. Let $R:\R\to\C\times\D$ and $R':\R'\to\C'\times\D'$ be relations of categories. We have projections $\pi_\C:\C\times\D\to\C$ and $\pi_\D:\C\times\D\to\D$. The product $R\times R'$ in $\rel$ is the relation $\R\times\R'\to(\C\times\C')\times(\D\times\D')$ sending $(x,x')$ to $((\pi_\C R(x),\pi_{\C'} R' (x')),(\pi_{\D} R(x),\pi_{\D'} R' (x')))$. Projections to first and second factors give the two structure maps for the product.

\begin{lemma}\label{product preserved}
    The functor $D:\rel\to\sset$ preserves products, so given two relations $R:\R\to\C\times\D$ and $R':\R'\to\C'\times\D'$ of categories, the projections 
    onto \(R\) and \(R'\) induce an isomorphism
    \(D(R \times R') \xrightarrow{\cong} D(R) \times D(R')\).
\end{lemma}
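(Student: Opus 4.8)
The plan is to reduce the statement to a levelwise identification of subsets inside a fixed product of nerves. First I would record that the nerve preserves products: since $\C\times\C'$ is a categorical product, $N(\C\times\C')_m=\cat([m],\C\times\C')\cong\cat([m],\C)\times\cat([m],\C')=N\C_m\times N\C'_m$ naturally in $[m]$, giving an isomorphism $N(\C\times\C')\cong N\C\times N\C'$ that I fix once and for all. Under this isomorphism the Dowker nerves $DR$, $DR'$ and $D(R\times R')$ are all simplicial subsets of the relevant nerves, so it suffices to prove the levelwise equality $D(R\times R')_m=DR_m\times DR'_m$ as subsets of $N\C_m\times N\C'_m$, and then to check that the map induced by the two product projections is exactly this identification.

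Next I would unwind the defining condition for the $m$-simplices of a Dowker nerve. Since $[0]$ is the terminal category, $[m]\times[0]$ is canonically isomorphic to $[m]$, and a functor $b\colon[0]\to\D$ is the choice of a single object of $\D$; hence $ER_{m,0}$ is identified with the set of functors $r\colon[m]\to\R$ for which $\pi_\D\circ R\circ r$ is constant, with $\pi_R(r)=\pi_\C\circ R\circ r$. Thus $a\in DR_m$ precisely when $a$ admits a lift $r\colon[m]\to\R$ satisfying $\pi_\C\circ R\circ r=a$ and $\pi_\D\circ R\circ r$ constant. The heart of the argument is then a decoupling computation for the product relation: reading off $R\times R'$ from its definition, for a functor $(r,r')\colon[m]\to\R\times\R'$ the $\C\times\C'$-coordinate of $(R\times R')\circ(r,r')$ is $(\pi_\C R r,\pi_{\C'}R' r')$ and its $\D\times\D'$-coordinate is $(\pi_\D R r,\pi_{\D'}R' r')$. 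A functor into $\D\times\D'$ is constant if and only if each of its two coordinates is constant, so $(a,a')\in D(R\times R')_m$ forces $a\in DR_m$ and $a'\in DR'_m$; conversely, the product of lifts witnessing $a\in DR_m$ and $a'\in DR'_m$ witnesses $(a,a')\in D(R\times R')_m$. This yields the desired levelwise equality.

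Finally I would note that naturality in $[m]$ is automatic, since each $DR_m\times DR'_m$ sits inside the fixed product $N\C\times N\C'$ and all face and degeneracy maps are restriction along order-preserving maps $[m']\to[m]$; so the levelwise equality is an equality of simplicial subsets. The product projections are morphisms of relations whose $\C$- and $\C'$-components are the projections $\pi_\C$ and $\pi_{\C'}$, so by the formula $Df(a)=f_1\circ a$ the induced map $D(R\times R')\to DR\times DR'$ sends $(a,a')\mapsto(a,a')$, which is exactly the identification just established, hence an isomorphism. The only real obstacle is the bookkeeping in the middle step, namely correctly recognising an $(m,0)$-simplex as a lift with constant $\D$-coordinate; once that is in hand, the decoupling is forced by the fact that both the apex $\R\times\R'$ and the target $\D\times\D'$ split as products, and everything else is formal.
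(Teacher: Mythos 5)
Your proof is correct and follows essentially the same route as the paper's: identify an \(m\)-simplex of the Dowker nerve with (the image under \(\pi_R\) of) a lift \(\tilde r\colon[m]\to\R\) whose composite \(\pi_\D\circ R\circ\tilde r\) is constant, then observe that constancy of a functor into \(\D\times\D'\) decouples into constancy of each coordinate. The only differences are cosmetic: you phrase the argument in terms of the image subsets inside \(N\C_m\times N\C'_m\) and explicitly verify naturality and that the projection-induced map realises the bijection, points the paper's proof leaves implicit.
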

\begin{proof}
    A simplex $r\in DR_m$ is a map $r:[m]\times[0]\to\R$ with the property that there exist $a:[m]\to\C$ and $b:[0]\to\D$ so that $R\circ r=a\times b$. It is uniquely defined by a map $\Tilde{r}:[m]\to\R$ such that the composition
    \begin{equation*}
        [m]\xrightarrow{\Tilde{r}}\R\xrightarrow{R}\C\times\D\xrightarrow{\pi_\D}\D
    \end{equation*}
    is constant. Explicitly, given \(r, a\) and $b$, we define $\Tilde r$ by $\Tilde{r}(i)=r(i,0)$. Conversely, given $\Tilde r$, the maps \(r, a\) and \(b\) are given by letting $r(i,0) =\Tilde{r}(i)$, $a=\pi_\C\circ R\circ\Tilde{r}$ and $b(0) = \pi_\D\circ R\circ\Tilde{r}(i)$.

    A simplex in $D(R\times R')_m$ is uniquely defined by a map $(\Tilde{r},\Tilde{r}'):[m]\to \R\times \R'$ where $\pi_{\D\times\D'}\circ(R\times R')\circ (\Tilde{r},\Tilde{r}') $ is constant, which is equivalent to $\pi_\D \circ R\circ\Tilde{r}$ and $\pi_{\D'}\circ R' \circ \Tilde{r}'$ both being constant. Thus, under the isomorphism \(\cat([m], \R \times \R') \xrightarrow{\cong} \cat([m], \R) \times \cat([m], \R')\), \(m\)-simplices of \(D(R \times R')\) are taken bijectively to \(m\)-simplices of \(D(R) \times D(R')\).   
\end{proof}
The following is a consequence of the fact that the Dowker nerve of a relation of the form \(\id_{\C \times \D} \colon \C \times \D \to \C \times \D\) is equal to the nerve of the category \(\C\).
\begin{lemma}\label{nerveof1 lemma}
    Given \(n \ge 0\),
    the Dowker nerve of the relation \({\id_{[n]\times[0]}} \colon [n] \times [0] \to [n] \times [0]\)
    is the simplicial \(n\)-simplex \(\Delta{[n]}\).\qed
\end{lemma}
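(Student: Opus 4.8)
The plan is to deduce the statement from the general fact recorded just above the lemma---that the Dowker nerve of an identity relation \(\id_{\C\times\D}\colon\C\times\D\to\C\times\D\) equals the nerve \(\Ns\C\) whenever \(\D\) is nonempty---and then to specialise to \(\C=[n]\), \(\D=[0]\), recalling that \(\Ns[n]=\Delta[n]\). So the first task is to establish this general fact.

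To do so, I would take \(R=\id_{\C\times\D}\), so that \(\R=\C\times\D\) and \(R\circ r=r\). Since \([0]\) is the terminal category, the projection \([m]\times[0]\to[m]\) is an isomorphism, and an \((m,0)\)-simplex \(r\colon[m]\times[0]\to\C\times\D\) is the same datum as a functor \([m]\to\C\times\D\), i.e.\ a pair \((a',b')\) with \(a'\colon[m]\to\C\) and \(b'\colon[m]\to\D\). The membership condition for \(ER_{m,0}\) asks \(r\) to be of the form \(a\times b\) with \(b\colon[0]\to\D\); under \([m]\times[0]\cong[m]\) this says precisely that \(b'\) is constant at the object \(b(0)\), and then \(\pi_R(r)=a=a'\). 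Conversely, every \(a'\colon[m]\to\C\) is realised by taking \(b'\) constant at an arbitrary object of \(\D\) (this is where nonemptiness of \(\D\) enters). Hence the image of \(\pi_R\colon ER_{m,0}\to\Ns\C_m\) is all of \(\Ns\C_m=\cat([m],\C)\), so \(DR_m=\Ns\C_m\); and since the structure maps of both \(DR\) and \(\Ns\C\) are precomposition in the \([m]\)-variable, this bijection is natural, yielding \(D(\id_{\C\times\D})=\Ns\C\).

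I would then specialise to \(\C=[n]\) and \(\D=[0]\). The category \([0]\) has a single object, so it is nonempty; in fact the \(\D\)-component \(b'\colon[m]\to[0]\) is necessarily constant, so every \(r\) already lies in \(ER_{m,0}\) and \(\pi_R\) is simply the bijection \(\cat([m],[n]\times[0])\cong\cat([m],[n])\). This gives \(D(\id_{[n]\times[0]})=\Ns[n]\). Finally I would recall that the nerve of the poset \([n]\) is the standard \(n\)-simplex: its \(m\)-simplices are \(\cat([m],[n])=\Delta([m],[n])=\Delta[n]_m\)---using that a functor \([m]\to[n]\) is the same as an order-preserving map \(\{0,\dots,m\}\to\{0,\dots,n\}\)---and these bijections commute with precomposition, so \(\Ns[n]=\Delta[n]\) as simplicial sets. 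Combining the two displays gives \(D(\id_{[n]\times[0]})=\Delta[n]\).

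There is no serious obstacle here; the argument is a direct unwinding of the definitions. The only points needing care are the bookkeeping across \([m]\times[0]\cong[m]\)---especially how the factorisation \(a\times b\) forces the \(\D\)-component to be constant---and checking that the levelwise bijections are natural in \([m]\), so that one obtains an equality of simplicial sets rather than merely a bijection at each level.
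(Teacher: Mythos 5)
Your proposal is correct and follows the same route as the paper, which states the lemma without proof as a consequence of the fact that \(D(\id_{\C\times\D}) = \Ns\C\); you simply supply the routine verification of that fact (via \([m]\times[0]\cong[m]\) and constancy of the \(\D\)-component) that the paper leaves implicit. Your observation that nonemptiness of \(\D\) is needed for surjectivity of \(\pi_R\) is a careful touch the paper's phrasing glosses over, and it holds trivially here since \(\D=[0]\).
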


For $i= 0,1$ we have the morphism of relations $d^i:\id_{[0]\times[0]}\to\id_{[1]\times[0]}$ where the map $d^i_0:[0]\times[0]\to [1]\times[0]$ does not hit $(i,0)$, the map $d^i_1:[0]\to[1]$ does not hit $i$ and $d^i_2=\id_{[0]}$. 

\begin{definition}
    Given two morphisms of relations $f^0,f^1: R\to R'$, a \emph{\transformation} $H$ from $f^1$ to $f^0$ is a morphism $H:\id_{[1]\times[0]}\times R \to R'$ of relations such that the diagram
    \begin{equation}\label{transformationdiagram}
        \begin{tikzcd}
            \id_{[0]\times[0]}\times R\arrow[rr,"d^i\times \id_R"] && \id_{[1]\times[0]}\times R\arrow[d,"H"]\\
            R\arrow[rr,"f^i"]\arrow[u,"\cong"] && R'
        \end{tikzcd}
    \end{equation}
    commutes for $i=0,1$.
\end{definition}

\begin{proposition}\label{homotopy prop}
    Given two morphisms of relations $f^0,f^1: R\to R'$ and a {\transformation} $H$ from $f^1$ to $f^0$, the maps $Df^0,Df^1:DR\to DR'$ are homotopic.
\end{proposition}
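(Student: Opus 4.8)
```latex
The plan is to produce an explicit simplicial homotopy between $Df^0$ and $Df^1$ by exploiting the product structure of the Dowker nerve established in Lemma \ref{product preserved}, together with the computation of the Dowker nerve of the interval in Lemma \ref{nerveof1 lemma}. Recall that a simplicial homotopy between two simplicial maps $g^0, g^1 \colon Y \to Y'$ is a simplicial map $G \colon \Delta[1] \times Y \to Y'$ restricting to $g^i$ along the two inclusions $\Delta[0] \to \Delta[1]$. The \transformation{} $H \colon \id_{[1]\times[0]} \times R \to R'$ is designed precisely so that, after applying $D$, it yields such a map once we identify the relevant Dowker nerves.

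First I would apply the functor $D$ to the \transformation{} $H$, obtaining a simplicial map $DH \colon D(\id_{[1]\times[0]} \times R) \to DR'$. Using that $D$ preserves products (Lemma \ref{product preserved}), the domain is naturally isomorphic to $D(\id_{[1]\times[0]}) \times DR$, and by Lemma \ref{nerveof1 lemma} with $n = 1$ we have $D(\id_{[1]\times[0]}) = \Delta[1]$. Composing with these identifications gives a simplicial map
\begin{equation*}
    G \colon \Delta[1] \times DR \xrightarrow{\cong} D(\id_{[1]\times[0]} \times R) \xrightarrow{DH} DR'.
\end{equation*}
This $G$ is the candidate homotopy, and I would take its geometric realization $|G| \colon \Delta^1 \times |DR| \to |DR'|$ to obtain a topological homotopy between $|Df^0|$ and $|Df^1|$.

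Next I would verify that $G$ restricts to $Df^0$ and $Df^1$ at the two endpoints. This is where the commutativity of diagram \eqref{transformationdiagram} enters. The two inclusions $\Delta[0] \to \Delta[1]$ correspond, under the identification $D(\id_{[1]\times[0]}) = \Delta[1]$ and via $D$ applied to $d^i$, to the maps $d^i \colon \id_{[0]\times[0]} \to \id_{[1]\times[0]}$. Applying the functor $D$ to the defining square \eqref{transformationdiagram}, using $D(\id_{[0]\times[0]} \times R) \cong DR$ (since $D(\id_{[0]\times[0]}) = \Delta[0]$ is a point by Lemma \ref{nerveof1 lemma} with $n=0$), and invoking functoriality and product-preservation of $D$, I would show that the composite of $G$ with the inclusion at vertex $i$ equals $Df^i$. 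This reduces to checking that the identifications are compatible with the structure maps $d^i$, which is a diagram chase.

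The main obstacle I anticipate is bookkeeping the several natural isomorphisms so that the endpoint identifications come out to $Df^0$ and $Df^1$ with the correct indexing (including which endpoint corresponds to which map, given that the \transformation{} runs ``from $f^1$ to $f^0$''). The substantive content is entirely carried by Lemmas \ref{product preserved} and \ref{nerveof1 lemma} and the functoriality of $D$; no genuinely new homotopy-theoretic input is required beyond recognizing that a map out of $\Delta[1] \times DR$ is a simplicial homotopy. Thus the argument is essentially formal once the identifications are set up, and the work lies in confirming that diagram \eqref{transformationdiagram} translates, under $D$, into exactly the two restriction conditions defining a homotopy.
```
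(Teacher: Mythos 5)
Your proposal is correct and follows essentially the same route as the paper: applying $D$ to the defining diagram \eqref{transformationdiagram} and using Lemmas \ref{product preserved} and \ref{nerveof1 lemma} to identify $D(\id_{[1]\times[0]}\times R)$ with $\Delta[1]\times DR$, so that the resulting map $\Delta[1]\times DR\to DR'$ is the desired simplicial homotopy restricting to $Df^i$ at the endpoints. Your extra care about the endpoint identifications and passing to geometric realizations is fine but adds nothing beyond what the paper's proof already encodes.
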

\begin{proof}
    Taking the Dowker nerve of diagram \eqref{transformationdiagram} using Lemma \ref{product preserved} and Lemma \ref{nerveof1 lemma}, for $i=0,1$, we get a commutative diagram of the form
    \begin{equation*}
        \begin{tikzcd}
            \Delta{[0]}\times DR\arrow[rr,"d_i\times \id_{DR}"] && \Delta{[1]} \times DR\arrow[d,"\widehat{DH}"]\\
            DR\arrow[rr,"Df^i"]\arrow[u,"\cong"] && DR'.
        \end{tikzcd}
    \end{equation*}
    The map $\widehat{DH}$ is the desired homotopy.
\end{proof}

\section{Applications}\label{sec:appl}
In order to justify our categorification of the Dowker Theorem, we 
show how it is related to Quillen's Theorem A and different versions of
the singular simplicial set of a simplicial complex.
\subsection{Quillen's Theorem A}
We first apply Theorem \ref{thm:dowkerduality} to prove Quillen's Theorem A. Given functors \(F \colon \C \to \A\) and \(G \colon \D \to \A\), the \emph{comma
category} \(F\downarrow G\) has objects given by triples \((c,d,f)\), where $c\in\C$, $d\in\D$ and \(f\) is a morphism \(f \colon Fc \to
Gd\) in $\A$. A morphism in \(F \downarrow G\), of the form \((c,d,f) \to
(c',d',f')\),
consists of morphisms \(\alpha_L \colon c \to c'\) and \(\alpha_R \colon d \to
d'\) such that the following diagram commutes:
\begin{equation*}
    \begin{tikzcd}
        Fc \arrow[d,"F\alpha_L"] 
        \arrow[r, "f"] & \arrow[d,"G\alpha_R"] Gd \\
        Fc' \arrow[r, "f'"] & Gd'.
    \end{tikzcd}
\end{equation*}

The projection \(R \colon F \downarrow G \to \C \times \D\) sending $(c,d,f)$ to $(c,d)$ is a relation. 
Let \(r \colon [m] \times [n] \to F \downarrow G\) be a \((m,n)\)-simplex of $ER$ with \(a \colon [m] \to
\C\) and \(b \colon [n] \to \D\) satisfying the equation \(R \circ r = a \times
b\).
Given \((i,j) \in [m] \times [n]\), we write \(r(i,j) = (a(i), b(j), f_{ij} \colon
Fa(i) \to Gb(j))\).
Note that \(f_{ij} = f_{mj} \circ Fa(i \to m)\), and that \(f_{mj} = Gb(0 \to j) \circ f_{m0}\), so
the morphism \(f_{ij} \colon Fai \to Gbj\)
is equal to the composition
\begin{displaymath}
    Fa(i) \xrightarrow{Fa(i \to m)} Fa(m) \xrightarrow{f_{m0}} Gb(0) \xrightarrow{Gb(0 \to j)} Gb(j).
\end{displaymath}

This means that \(r\) is uniquely determined by \(a\), \(b\) and \(f_{m0}
\colon Fa(m) \to Gb(0)\).
With this in mind we see that the fiber of \(a \in DR_m\) under \(\pi_R:ER_{m,n} \to DR_m\) is
isomorphic to the nerve \(N(Fa(m) \downarrow G)\) of the comma category \(Fa(m)
\downarrow G\) for the functors \(Fa(m) \colon * \to \D\) and \(G\). Similarly,
the fiber of \(b \in DR^T_n\) under \(ER_{m,n}
\to DR^T_n\) is
isomorphic to nerve \(N(F\downarrow Gb(0))\) of the comma category \(F \downarrow Gb(0)\)
for the functors \(F\) and \(Gb(0) \colon * \to \C\).

Specializing to
$\A=\D$ and $G=\id_\D$ we can prove the following:
\begin{corollary}[Quillen's Theorem A \cite{quillen}] Consider a functor
    $F:\C\to\D$. If $N(F\downarrow d)$ is contractible for every object $d\in \D$, then the
    map $NF:N\C\to\ N\D$ is a weak equivalence.
\end{corollary}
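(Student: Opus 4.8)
The plan is to realise \(NF\) as, up to homotopy, the zig-zag of weak equivalences produced by the Dowker Equivalence theorem for the relation \(R \colon F \downarrow \id_\D \to \C \times \D\) and its transpose. With \(\A = \D\) and \(G = \id_\D\), I first check that both \(R\) and \(R^T\) are Dowker relations. By the computation preceding the statement, the fiber of \(a \in DR_m\) under \(\pi_R\) is the nerve \(N(Fa(m) \downarrow \id_\D)\); since the under-category \(Fa(m) \downarrow \id_\D\) has the initial object \((Fa(m), \id_{Fa(m)})\), this fiber is always non-empty and contractible, so \(R\) is a Dowker relation. Dually, the fiber of \(b \in DR^T_n\) is \(N(F \downarrow b(0))\), which is contractible by hypothesis, so \(R^T\) is a Dowker relation as well.

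Next I identify the two Dowker nerves. An \((m,0)\)-simplex of \(ER\) is essentially a functor \([m] \to F \downarrow \id_\D\) whose \(\D\)-component is constant, and for any \(a \colon [m] \to \C\) one obtains such a lift by taking the target object \(Fa(m)\) together with the identity \(\id_{Fa(m)}\); hence \(\pi_R\) is surjective on each level and \(DR = N\C\). Symmetrically, a functor \(b \colon [n] \to \D\) lies in \(DR^T_n\) precisely when the comma category \(F \downarrow b(0)\) is non-empty, and this holds for every \(b\) since each \(N(F \downarrow d)\) is contractible, in particular non-empty; hence \(DR^T = N\D\). Applying Theorem \ref{thm:dowkerduality} to \(R\) and to \(R^T\), and using Lemma \ref{gamma}, I obtain the zig-zag
\[
N\C = DR \xleftarrow{\;\od(\pi_R)\;} \od(ER) \xrightarrow{\;\od(\tw^*)\;} \od(ER^T) \xrightarrow{\;\od(\pi_{R^T})\;} DR^T = N\D,
\]
in which \(\od(\pi_R)\) and \(\od(\pi_{R^T})\) are weak equivalences and \(\od(\tw^*)\) is an isomorphism.

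It remains to connect this zig-zag to \(NF\), and this is the step I expect to be the main obstacle. Unwinding the definitions, \(\od(\pi_R)\) sends a diagonal simplex \(r \in ER_{n,n}\) with \(R \circ r = a \times b\) to \(a\), so \(NF \circ \od(\pi_R)\) sends \(r\) to the functor \(F \circ a = NF(a)\), while \(\od(\pi_{R^T}) \circ \od(\tw^*)\) sends \(r\) to \(b\). The key observation is that the diagonal structure morphisms \(f_{ii} \colon Fa(i) \to b(i)\) of \(r\) assemble into a natural transformation \(F \circ a \Rightarrow b\) of functors \([n] \to \D\); the two naturality squares follow from functoriality of \(r\) in each of its two variables, in the same manner as the factorisation of \(f_{ij}\) recorded before the statement. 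Equivalently, each \(r\) determines a functor \(\hat\theta_r \colon [n] \times [1] \to \D\) restricting to \(F \circ a\) and to \(b\) on the two ends. I would then verify that these transformations are compatible with the simplicial operators — restricting \(r\) along \(\alpha \times \alpha\) restricts the transformation along \(\alpha\) — so that the assignment \((r, \phi) \mapsto \hat\theta_r \circ (\id_{[n]}, \phi)\), for \(\phi \colon [n] \to [1]\) a simplex of \(\Delta[1]\), defines a simplicial homotopy \(\od(ER) \times \Delta[1] \to N\D\) from \(NF \circ \od(\pi_R)\) to \(\od(\pi_{R^T}) \circ \od(\tw^*)\).

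Granting this homotopy, \(NF \circ \od(\pi_R)\) is homotopic to the composite of an isomorphism with a weak equivalence, hence is itself a weak equivalence; since \(\od(\pi_R)\) is a weak equivalence, the two-out-of-three property for homotopy equivalences of geometric realisations forces \(NF\) to be a weak equivalence. The only genuinely delicate point is the simplicial naturality of the homotopy \(H\); the verification that \(R\) and \(R^T\) are Dowker relations, together with the identifications \(DR = N\C\) and \(DR^T = N\D\), are routine consequences of the comma-category computation already carried out in the text.
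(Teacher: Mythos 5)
Your proposal is correct and follows the paper's proof essentially step for step: the same verification that \(R\) and \(R^T\) are Dowker relations via the initial object \((*,Fa(m),\id_{Fa(m)})\) and the hypothesis on \(N(F\downarrow d)\), the same identifications \(DR = N\C\) and \(DR^T = N\D\), the same zig-zag from Theorem \ref{thm:dowkerduality}, and the same concluding homotopy \(NF \circ \od(\pi_R) \simeq \od(\pi_{R^T}) \circ \od(\tw^*)\) followed by two-out-of-three. The only difference is presentational: the paper obtains that homotopy by factoring both composites through \(\oD^* \colon \od(ER) \to N(F\downarrow \id_\D)\) and applying the homotopy of nerves induced by the natural transformation \(\eta \colon F\circ\pi_\C \to \pi_\D\), \(\eta_{(c,d,f)} = f\), whereas you construct the same simplicial homotopy directly from the diagonal components \(f_{ii} \colon Fa(i) \to b(i)\) — unwinding the paper's factorisation yields exactly your map \(\od(ER)\times\Delta[1]\to N\D\), and the compatibility checks you flag as the delicate point are the routine naturality squares supplied by functoriality of \(r\).
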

\begin{proof}
Consider the comma category $F\downarrow\id_\D$. Note that the category
$Fa(m)\downarrow\id_\D$ has initial object $(*,Fa(m),\id_{Fa(m)})$, so the nerve
$N(Fa(m)\downarrow\id_\D)$ is contractible, implying that the 
projection $R:F\downarrow\id_\D\to\C\times\D$ is a Dowker relation.
Furthermore, by the preceding discussion, if all $N(F\downarrow
d)$ are contractible, then also the transpose \(R^T\)
is a Dowker relation.
Since \(r \in ER_{m,0}\) is uniquely determined by \(a \colon [m]\to \C\),
\(d \in \ob \D\) and \(f \colon Fa(m) \to d\),
the set of $m$-simplices of
the nerve
$DR$ consists of functors $a:[m]\to\C$ such that there exists an object $d$ in
$\D$ and a morphism $f:Fa(m)\to d$.
We can always choose $d=Fa(m)$ and
$f=\id_{Fa(m)}$, so
$DR=N\C$.
The simplices in $DR^T_n$ similarly are functors $b:[n]\to\D$ such that there
is an object $c$ in $\C$ and a functor $f:Fc\to b(0)$, but such a triple $(c,*,f)$
is an object in $F\downarrow b(0)$ which is non-empty by the assumption that
$N(F\downarrow b(0))$ is contractible. So we get that $DR^T=N\D$ and  $N\C\simeq N\D$.
We still need to show that $NF$ is a weak equivalence.

Consider the projections $\pi_\C:F\downarrow \id_\D\to\C$ and $\pi_\D:F\downarrow \id_\D\to\D$ sending $(c,d,f)$ to $c$ and $d$ respectively. We have the commuting diagram
\begin{equation}\label{quillenfactorization}
    \begin{tikzcd}
        &{\od(ER)} \arrow[d,"{\oD^*}"] \arrow[dl,"\od(\pi_{R})",swap]
        \arrow[r, "\od(\tw^*)"]&
        \od(ER^T) \arrow[d, "\od(\pi_{R^T})"]\\
        N\C& N(F\downarrow\id_\D)\arrow[l,"N\pi_\C"]\arrow[r,"N\pi_\D",swap]&N\D,
    \end{tikzcd}
\end{equation}
where $\oD^* \colon \od(ER)_m = ER_{m,m} \to N(F\downarrow\id_\D)_m$ 
is precomposition with the diagonal functor
$\oD:[m]\to[m]\times[m]$. Furthermore, there is a natural map
$\eta:F\circ\pi_\C\to\pi_\D$ with components $\eta_{(c,d,f)}=f:Fc\to d$
for objects $(c,d,f)$ in $F\downarrow \id_\D$. This induces a homotopy on
nerves $NF\circ N\pi_\C\simeq N\pi_\D$. Using diagram
\eqref{quillenfactorization} we get  $NF\circ\od(\pi_{R})\simeq\od(\pi_{R^T}) \circ \od(\tw^*)$.
The map $\od(\tw^*)$ is an isomorphism. Since $R$ is a Dowker relation, the maps $\od(\pi_{R})$ and
$\od(\pi_{R^T})$ are weak equivalences, and therefore so is $NF$.
\end{proof}
The above argument is very close to the proof in \cite{quillen}. Arguably, 
the proof in \cite{quillen} is more elegant than the proof presented here. The point we are making is that Quillen's Theorem A and the Dowker duality of Theorem \ref{thm:dowkerequivalence} are closely connected.

\subsection{Simplicial Sets from Simplicial Complexes}\label{sec:example}
In this subsection we look at two ways of turning simplicial complexes into simplicial sets resembling the singular complex of a topological space. We use the Dowker duality of Theorem \ref{thm:dowkerequivalence} to prove that one of these singular complex constructions is functorial and that it is of the correct homotopy type. We begin by investigating relations given by inclusions of full subcategories.

\begin{definition}
    Let $R:\R\to\C\times\D$ be a relation with \(R\) an inclusion of a full subcategory. Given $a\in N\C_m$, we let $\D^a_R\subseteq\D$ be the full subcategory consisting of all objects $d \in \D$ such that \((a(i), d) \in \R\) for \(i = 0, 1, \dots, m\).
\end{definition}
\begin{lemma}\label{nerveremark}
    Let $R:\R\to\C\times\D$ be a relation with \(R\) an inclusion of a full subcategory. Given $a\in N\C_m$, the projection \(\pi_R^a \to N\D\) taking \(r \colon [m] \times [n] \to \R\) to the uniquely determined \(b \colon [n] \to \D\) such that \(R \circ r = a \times b\) induces a bijection \(\pi_{R}^a \to N\D^a_R\) of simplicial sets.
\end{lemma}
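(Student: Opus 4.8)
The plan is to exploit that $R$ is the inclusion of a full subcategory to show that each simplex of the fiber $\pi_R^a$ is completely determined by the functor $b$ it projects to, and that the admissible $b$ are exactly the functors landing in $\D_R^a$. First I would observe that since $R$ is the inclusion $\R \hookrightarrow \C \times \D$, the equation $R \circ r = a \times b$ forces $r(i,j) = (a(i), b(j))$ for every $(i,j) \in [m] \times [n]$. Hence an $n$-simplex $r$ of $\pi_R^a$ is uniquely determined by its projection $b$, which immediately gives injectivity of the map $\pi_R^a \to N\D$, $r \mapsto b$. It then remains to identify its image.

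Next I would characterize which functors $b \colon [n] \to \D$ actually arise. The assignment $(i,j) \mapsto (a(i), b(j))$ defines a functor $[m] \times [n] \to \C \times \D$, and it factors through the full subcategory $\R$ precisely when all of its object-values $(a(i), b(j))$ lie in $\ob \R$: fullness guarantees that once the objects are in $\R$, every morphism between them in $\C \times \D$ already belongs to $\R$, so no further condition on morphisms is required (and composition and identities are inherited). Thus $r = a \times b$ defines a simplex of $\pi_R^a$ if and only if $(a(i), b(j)) \in \ob \R$ for all $i \in [m]$ and all $j \in [n]$.

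Then I would rewrite this condition in terms of $\D_R^a$. By definition $\D_R^a$ is the full subcategory of $\D$ on those objects $d$ with $(a(i), d) \in \R$ for all $i$. The condition above says exactly that $b(j) \in \ob \D_R^a$ for every $j \in [n]$, and since $\D_R^a$ is full, this is equivalent to $b$ factoring through $\D_R^a$, i.e. to $b$ being an $n$-simplex of $N\D_R^a$. Combining this with the injectivity from the first step, the map $r \mapsto b$ is a bijection from the $n$-simplices of $\pi_R^a$ onto $(N\D_R^a)_n$.

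Finally I would verify that this family of level-wise bijections is simplicial, hence an isomorphism of simplicial sets. A morphism $\beta \colon [n'] \to [n]$ acts on $\pi_R^a$ by $r \mapsto r \circ (\id_{[m]} \times \beta)$, which projects to $b \circ \beta$, and this is precisely the action of $\beta$ on $N\D_R^a$; so the bijections commute with all simplicial operators. I do not expect a serious obstacle here: the only point requiring genuine care is the fullness argument in the second step, namely that a diagram in $\C \times \D$ whose objects all lie in $\R$ automatically factors through $\R$. This is exactly where the hypothesis that $R$ is a full subcategory inclusion enters, and it is what makes the fiber $\pi_R^a$ coincide with an honest nerve.
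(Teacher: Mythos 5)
Your proof is correct and takes essentially the same route as the paper's: injectivity because $R$ is an inclusion, and surjectivity because fullness lets $a \times b \colon [m] \times [n] \to \C \times \D$ factor through $\R$ exactly when $b$ takes values in $\D_R^a$. You merely spell out the fullness step and the compatibility with simplicial operators, both of which the paper's terser proof leaves implicit.
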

\begin{proof}
    By construction, the given \(a\) and \(r\) as in the statement, the uniquely determined \(b \colon [n] \to \D\) takes values in \(\D^a_R\). Thus we have an induced function \(\pi_{R}^a \to N\D^a_R\) of simplicial sets. Since \(R\) is an inclusion, the assignment \(r \mapsto b\) is injective. For surjectivity, note that by construction, given \(b \in N\D^a_R\), the functor \(a \times b \colon [m] \times [n] \to \C \times \D\) factors through \(\R\).
\end{proof}

    


The nerve of a category with either initial or terminal object is contractible \cite[p.8]{quillen}, so we have the following corollary.

\begin{corollary}\label{cor: initial terminal}
    Let $R:\R\to\C\times\D$ be the inclusion of a full subcategory. If $\D$ has the property that all full subcategories have an initial or terminal object, then $R$ is a Dowker relation. \qed
\end{corollary}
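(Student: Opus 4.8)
The plan is to reduce the claim directly to the identification of the fiber carried out in Lemma \ref{nerveremark}, and then invoke the contractibility of nerves of categories possessing an initial or terminal object. To verify the defining condition of Definition \ref{thm:dowkerrelation}, I must show that for \emph{every} $a \in N\C_m$ the fiber $\pi_R^a$ is contractible or empty, so I would begin by fixing an arbitrary $a \colon [m] \to \C$.

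Since $R$ is by hypothesis the inclusion of a full subcategory, Lemma \ref{nerveremark} applies and furnishes a bijection of simplicial sets $\pi_R^a \xrightarrow{\cong} N\D^a_R$, where $\D^a_R \subseteq \D$ is the full subcategory on those objects $d$ with $(a(i),d) \in \R$ for all $i = 0, 1, \dots, m$. It therefore suffices to show that $N\D^a_R$ is either contractible or empty. The key observation is that $\D^a_R$ is itself a full subcategory of $\D$, so the hypothesis on $\D$ applies to it.

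This yields two cases. If $\D^a_R$ is empty, then $N\D^a_R$ is the empty simplicial set, and the bijection forces $\pi_R^a$ to be empty as well. If $\D^a_R$ is nonempty, then by hypothesis it has an initial or terminal object, and the fact cited just before the corollary (that the nerve of such a category is contractible) shows that $N\D^a_R$, and hence $\pi_R^a$, is contractible. In either case $\pi_R^a$ is contractible or empty, and since $a$ was arbitrary, $R$ is a Dowker relation.

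No substantial obstacle arises, as Lemma \ref{nerveremark} already packages the identification of the fiber with a nerve; the entire argument is a case split. The one point meriting a word of care is the status of the empty full subcategory, which possesses neither an initial nor a terminal object: the hypothesis on $\D$ should be understood to concern nonempty full subcategories, with the empty case feeding precisely into the \emph{empty} alternative of the Dowker condition rather than the \emph{contractible} one.
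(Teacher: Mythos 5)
Your proof is correct and follows exactly the route the paper intends (the corollary is stated with a \qed precisely because it is immediate from Lemma \ref{nerveremark} together with the cited fact that nerves of categories with an initial or terminal object are contractible). Your parenthetical care about the empty full subcategory, which has neither an initial nor a terminal object and so must feed into the \emph{empty} alternative of Definition \ref{thm:dowkerrelation}, is a sensible clarification of the hypothesis but does not change the argument.
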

We see two examples of such categories below, namely categories that are totally ordered sets and the translation category of a set.

A \emph{simplicial complex} $(K,V)$ is a set $V$ and a set $K$ of finite 
subsets of
$V$ such that $\sigma\in K$ and $\tau\subseteq\sigma$ implies $\tau\in K$.
We follow standard terminology and say that \(K\) is a simplicial complex, leaving the vertex set \(V\) implicit. Note that inclusion $\subseteq$ is a partial order on $K$ making it a partially ordered set $K_\subseteq$. 

Consider the topological space $[0,1]^S$ whose elements are functions from a set $S$ to the interval $[0,1]\subseteq\RR$. If \(S\) is finite, then \([0,1]^{S} = \prod_S [0,1]\) is given the product topology.
If \(S\) is infinite, then \([0,1]^{S}\) is given the topology where \(U \subseteq [0,1]^{S}\) is open if and only if for every finite subset \(W\) of \(S\) the set \(U \cap [0,1]^{W}\) is open in \([0,1]^{W}\). The \emph{support} of a function $S\to[0,1]$ is the subset of $S$ consisting of the elements that give non-zero values of the given function.

The {\em geometric realization \(|K|\)} of a simplicial complex \((K,V)\) is the subspace of \([0,1]^{V}\) consisting of functions \(\alpha \colon V \to [0,1]\) satisfying firstly that its support is a simplex in $K$ and secondly that the sum of its values is equal to \(1\), that is \(\sum_{v \in V} \alpha(v) = 1\).

We consider two ways of constructing a simplicial set from a simplicial complex $(K,V)$. The \emph{singular complex} on \(K\) is the simplicial set $\sing(K)$ whose set of $m$-simplices are 
\begin{equation*}
    \sing(K)_{m}=\{a:\{0,1,\dots,m\}\to V\, \vert \,
    \{a(0),a(1),\dots,a(m)\}\in K\}.
\end{equation*}
The simplicial structure on $\sing(K)$ is induced from the cosimplicial set \([m] \mapsto \{0,\dots, m\}\) given by forgetting the order on \([m]\).

Suppose that the simplicial complex \((K,V)\) has a total order $\leq$ on \(V\). The \emph{ordered singular complex} on \(K\) is the simplicial set $\sing_\leq(K)$ with set of $m$-simplices given by order-preserving maps, that is,
\begin{equation*}
        \sing_\leq (K)_{m}=\{a:[m]\to V_\leq\, \vert \,
    a([m])\in K\}.
\end{equation*}
This simplicial set is a simplicial subset of the nerve of the category \(V_{\le}\).

\begin{remark}
    The functor \(K \mapsto \sing (K)\) from simplicial complexes to simplicial sets is right adjoint to a functor \(X \mapsto MX\). Here \(MX\) is the simplicial complex with vertex set \(X_0\) and 
    with simplices given by vertex sets of simplices of the simplicial set \(X\). Note that by the vertex set of \(x \in X_n\) we mean the set of zero-dimensional faces of \(x\). Moreover, the functor \(K \mapsto \sing_{\le} (K)\) from ordered simplicial complexes to the category of simplicial sets with a total order on the set of \(0\)-simplices also has a right adjoint functor.
\end{remark}

We now explain how $\sing_\le(K)$ and $\sing(K)$ can be considered as Dowker nerves of relations.
\begin{enumerate}
    \item Assume that $V$ has a total order $\leq$ making it a totally ordered set $V_\le $. Consider the full subcategory $\R_1\subseteq V_\le\times K_\subseteq$ where $(v,\sigma)\in\ob{\R_1}$ if and only if $v\in\sigma$. The inclusion $R_1:\R_1\to V_\le \times K_\subseteq$ is a relation, and $D R_1 = \sing_\le (K)$.
    \item 
    The \emph{translation category} $\Vtr$ of \(V\) has object set $\ob{\Vtr}=V$ and a unique morphism $v\to w$ between any pair of objects $v,w\in V$. Consider the full subcategory $\R_2\subseteq\Vtr\times K_\subseteq$ where $(v,\sigma)\in\ob{\R_2}$ if and only if $v\in\sigma$. The inclusion $R_2:\R_2\to\Vtr\times K_\subseteq$ is a relation, and $DR_2=\sing(K)$.
\end{enumerate}

Note that for any choice of order $\leq$ on the vertex set $V$ of a simplicial complex $(K,V)$ we have an injective map $\sing_\le(K)\hookrightarrow\sing(K)$ induced by the inclusion $V_\leq\hookrightarrow\Vtr$. 

We define a map \(\varphi \colon |\sing (K)| \to |K|\). Every element in \(|\sing (K)|\) is represented by a pair \((a,t) \in \sing(K)_m
\times \Delta^m\).
Given such a pair \((a,t)\) with \(t = (t_0, \dots, t_m)\), let \(a_*(t) \colon
V \to [0,1]\) be the
element of \(|K|\) with \(a_*(t)(v) = 
\sum_{a(i) = v} t_i\).
It is straight-forward to verify that $(a, t) \mapsto \varphi(a, t) = a_*(t)$ defines a natural continuous map \(\varphi \colon
|\sing(K)| \to |K|\).
Given a total order \(\le\) on the vertex set \(V\) of \(K\), we denote by $\varphi_\leq:|\sing_\leq(K)|\to|K|$ the map given by the composition
\begin{equation*}
    |\sing_\leq(K)| \hookrightarrow|\sing(K)|\xrightarrow{\varphi} |K|.
\end{equation*}
The following is well-known (stated by Milnor in \cite[p.358]{milnor} and Curtis in \cite[p.118]{curtis}).
\begin{proposition}\label{singrealiz}
    Let \(K\) be a simplicial complex with a total order \(\le\) on the vertex set \(V\).
    The map \(\varphi_\leq \colon |\sing_\leq (K)| \to |K| \) is a homeomorphism.
\end{proposition}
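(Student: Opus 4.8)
The plan is to show that $\varphi_\leq$ is a continuous bijection and then upgrade this to a homeomorphism by comparing the CW structures, equivalently the weak topologies, on source and target. Continuity is immediate, since by definition $\varphi_\leq$ is the composite of the inclusion $|\sing_\leq(K)| \hookrightarrow |\sing(K)|$ with the continuous map $\varphi$.

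For bijectivity I would invoke the standard fact that every point of the geometric realization of a simplicial set has a unique representative $(a,t)$ in which $a$ is nondegenerate and $t$ lies in the interior of $\Delta^m$, meaning all barycentric coordinates are strictly positive. Since $\sing_\leq(K)$ is a simplicial subset of the nerve $N(V_\leq)$, its nondegenerate $m$-simplices are exactly the strictly increasing maps $a \colon [m] \to V_\leq$ with $a([m]) \in K$; these correspond bijectively to the simplices $\sigma \in K$ with $m+1$ elements via $a \mapsto a([m])$, the inverse being the unique order-preserving enumeration of $\sigma$. For such a representative one computes $\varphi_\leq(a,t) = a_*(t)$, a function with support exactly $a([m])$ and with $a_*(t)(a(i)) = t_i$. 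Matching this against the description of $|K|$ as the set of functions $\alpha \colon V \to [0,1]$ whose support is a simplex of $K$ and whose values sum to $1$, I would read off that $(a,t) \mapsto a_*(t)$ is a bijection: given $\alpha \in |K|$ with support $\sigma = \{v_0 < \cdots < v_m\}$, its unique preimage is $a(i) = v_i$, $t_i = \alpha(v_i)$.

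To promote the continuous bijection to a homeomorphism, I would observe that both $|\sing_\leq(K)|$ and $|K|$ are CW complexes carrying the weak topology relative to their closed cells, the $m$-cells on each side being indexed by the $m$-simplices $\sigma \in K$. The key computation is that the characteristic map $\Phi_a \colon \Delta^m \to |\sing_\leq(K)|$ of a nondegenerate simplex $a$, followed by $\varphi_\leq$, equals the affine characteristic map $\Delta^m \to |K|$ of the corresponding simplex $\sigma = a([m])$, which is a homeomorphism onto the closed simplex $\bar\sigma$. It follows that $\Phi_a$ is injective, hence, being a continuous injection from a compact space into a Hausdorff space, a homeomorphism onto its image $\bar a$, so that $\varphi_\leq$ restricts to a homeomorphism $\bar a \to \bar\sigma$ on each closed cell. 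A continuous bijection between CW complexes that sends the set of cells bijectively to the set of cells and restricts to a homeomorphism on each closed cell is a closed map: a subset meets every closed cell of the target in a closed set if and only if its preimage meets every closed cell of the source in a closed set. Hence $\varphi_\leq^{-1}$ is continuous.

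The hard part will be the continuity of the inverse, that is, the passage from continuous bijection to homeomorphism. The delicate point is to reconcile the geometric realization of $\sing_\leq(K)$, which is a priori built as a quotient involving all simplices including the degenerate ones, with the CW structure of $|K|$, whose cells see only the nondegenerate simplices of $K$. This is precisely what the unique nondegenerate representative theorem, and the resulting identification of characteristic maps, resolves; once the cells are matched up homeomorphically the weak-topology argument is routine. Some care is also needed when $V$ is infinite, where one relies on the topology on $[0,1]^{V}$ described in the text and on the fact that both realizations are colimits over their finite subcomplexes.
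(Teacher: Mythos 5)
Your proposal is correct, but it takes a genuinely different route from the paper. The paper constructs an explicit inverse: for finite \(V\) of cardinality \(m+1\) it uses the order-preserving enumeration \(\gamma \colon [m] \to V\) to send \(\alpha \in |K|\) to the class of the pair \((\gamma, \alpha \circ \gamma) \in \sing_\le(K)_m \times \Delta^m\), checks directly that this \(\psi\) is continuous and inverse to \(\varphi_\le\), and then handles infinite \(V\) by writing \(K = \bigcup_W K_W\) and \(\sing_\le(K) = \bigcup_W \sing_\le(K_W)\) over finite subsets \(W \subseteq V\), using that both realization topologies commute with these unions. You instead prove bijectivity abstractly via the unique nondegenerate-representative (Eilenberg--Zilber) lemma, matching nondegenerate simplices of \(\sing_\le(K)\) --- strictly increasing chains --- with simplices of \(K\), and you get continuity of the inverse from the cell-by-cell closed-map argument in the weak topology rather than from an explicit formula. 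The paper's route buys a short, self-contained point-set argument with no CW machinery; your route buys uniformity (finite and infinite \(V\) are treated at once, since the paper's topology on \([0,1]^V\) makes \(|K|\) carry exactly the weak topology over finite subcomplexes that your argument needs) and strictly more information, namely that \(\varphi_\le\) is an isomorphism of cell structures, so that \(|\sing_\le(K)|\) is a regular CW complex with cells indexed by the simplices of \(K\). It is also worth noting that your bijectivity step is the more careful one: the paper's pair \((\gamma, \alpha \circ \gamma)\) lies in \(\sing_\le(K)_m \times \Delta^m\) only if \(\gamma([m]) = V\) is itself a simplex of \(K\), so as written \(\psi\) presupposes that \(K\) is the full simplex on \(V\); the fix is to enumerate only the support of \(\alpha\), which is precisely what your nondegenerate-representative bookkeeping does automatically.
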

\begin{proof}
    We first consider the
    situation where \(V\) is finite. If \(V\) has cardinality \(m + 1\),
    then \(V_{\le}\) is isomorphic to the ordinal \([m]\) by
    an order-preserving bijection \(\gamma \colon [m] \to V\).
    Given an element \(\alpha \colon V \to [0,1]\) of \(|K|\), the composition
    \(\alpha \circ \gamma \colon [m] \to [0,1]\) is an element of \(\Delta^m\),
    so the pair \((\gamma, \gamma \circ \alpha) \in \sing_{\le}(K)_m \times
    \Delta^m\)
    represents an element \(\psi(\alpha) \in |\sing_\le(K)|\).
    This defines a continuous map \(\psi \colon |K| \to |\sing_\le(K)|\).
    A direct verification yields that \(\varphi_\leq\) and \(\psi\) are inverse of
    each other, and thus they are homeomorphisms.
    
    If \(V\) is not finite, given a finite subset \(W\)
    of \(V\) we let \(W_\le\) be the total order induced from \(V_\le\), and we
    let \(K_W\) be the simplicial complex on the vertex set \(W\) consisting 
    of subsets of \(W\) contained in \(K\). Then \(K\) is the union of the 
    simplicial complexes \(K_W\) for \(W\) a finite subset of \(V\) and 
    \(\sing_\le(K) = \bigcup_{W \subseteq V} \sing_\le(K_W)\), where the union
    is taken over all finite subsets of \(V\). That \(\varphi_\leq \colon |\sing_\le(K)| \to |K|\) is a homeomorphism now follows from the fact that both kinds of geometric 
    realization are given a topology that commutes with unions, and that \(\varphi_\leq = \bigcup_{W \subseteq V} \varphi_\leq^{W}\), where the union is taken over all finite subsets of \(V\) 
    and \(\varphi^W_\leq \colon |\sing_\le(K_W)| \to
    |K_W|\) is the restriction of \(\varphi_\leq\) to \(|\sing_\le(K_W)|\).
\end{proof}

The geometric realization of the simplicial set $\sing_\leq (K)$ is homeomorphic to the geometric realization of the simplicial complex $(K,V)$ but choosing an order on \(V\) breaks functoriality. The simplicial set $\sing(K)$ is functorial in \((K,V)\), but its geometric realization is not homeomorphic to the geometric realization of $(K,V)$. However, we proceed to show that they are homotopy equivalent.

Consider the full subcategory \(\R_0 \subseteq (\V \times V_\le) \times K_\subseteq\) consisting of pairs \(((v,w), \sigma)\), where both \(v\) and \(w\) are vertices of the simplex \(\sigma\). Let \(R_0 \colon \R_0 \to (\V \times V_\le ) \times K_\subseteq\) be the inclusion relation. The projections \(\V \times V_\le \to V_\le\) and \(\V \times V_\le \to \V\) induce morphisms of relations \(R_0 \to R_1\) and \(R_0 \to R_2\) giving, by Theorem \ref{thm:dowkerequivalence}, a commutative diagram of the form

\begin{equation}
    \label{complexdiagram}
    \begin{tikzcd}
        DR_1  & \od(ER_1) \arrow[l, "\od(\pi_{R_1})", swap]
        \arrow[r, "\od(\tw^*)"] &
        \od(ER_1^T)  \arrow[r, "\od(\pi_{R_1^T})"] & DR_1^T \\
        DR_0 \arrow[u] \arrow[d] & \od(ER_0) \arrow[l, "\od(\pi_{R_0})", swap]
        \arrow[d] \arrow[u] \arrow[r, "\od(\tw^*)"] &
        \od(ER_0^T) \arrow[d] \arrow[u] \arrow[r, "\od(\pi_{R_0^T})"]  &
        DR_0^T \arrow[d] \arrow[u] \\
        DR_2  & \od(ER_2)  \arrow[l, "\od(\pi_{R_2})", swap]
        \arrow[r, "\od(\tw^*)"] &
        \od(ER_2^T)  \arrow[r, "\od(\pi_{R_2^T})"] & DR_2^T.
    \end{tikzcd}
\end{equation}

We show that all relations appearing in diagram \eqref{complexdiagram} are Dowker relations so that all horizontal maps are weak equivalences. 
The categories $V_\le$, $\Vtr$ and \(\V \times V_{\le} \) have the property that every full subcategory has an initial object, so by Corollary \ref{cor: initial terminal} we conclude that that $R_1^T$, $R_2^T$ and $R_0^T$ are Dowker relations. 
Next, let $a:[m]\to V_\le $ be a functor whose image is a simplex in $K$. By Lemma \ref{nerveremark} the fiber $\pi^a_{R_1}$ is isomorphic to the nerve of the category $(K_\subseteq)_{R_1}^a$ consisting of all simplices that contain the image of $a$. The simplex $a([m])$ is an initial object in $(K_\subseteq)_{R_1}^a$, so the fiber is contractible and $R_1$ is a Dowker Relation. 
Similarly, the nerve $\Ns(K_\subseteq)_{R_2}^{a}$ is contractible for every $a\in (DR_2)_m$ and the nerve $\Ns(K_\subseteq)_{R_0}^{a}$ is contractible for every $a\in (DR_0)_m$, making $R_2$ and $R_0$ Dowker relations as well. 

The rightmost vertical maps in diagram \eqref{complexdiagram} are identity maps, thus we can conclude that the maps \(\sing_\leq(K)=DR_1 \leftarrow DR_0 \to DR_2=\sing(K)\) are weak equivalences. Finally, consider the (non-commutative) diagram
\begin{equation} \label{smalltriangle}
\begin{tikzcd}
    R_0\arrow[r]\arrow[d] & R_2 \\
    R_1\arrow[ur,hook] &
\end{tikzcd}
\end{equation}
The functor $([1]\times[0])\times\R_0\to \R_2$ defined on objects by
\begin{equation*}
    ((i,0),((v,w),\sigma))\mapsto 
    \begin{cases}
    (v,\sigma) & \text{if }i=0\\
    (w,\sigma) & \text{if }i=1
    \end{cases}
\end{equation*}
induces a {\transformation} $H:\id_{[1]\times[0]}\times R_0\to R_2$ from the top path in diagram \eqref{smalltriangle} to the bottom path, and so by Proposition \ref{homotopy prop} the two paths after taking the Dowker nerve are homotopic. In particular, the top-left triangle in the diagram
\begin{equation*}\label{naturaldiagram}
    \begin{tikzcd}
        {\vert DR_0\vert} \arrow[r,"\simeq"]
        \arrow[d,"\simeq",swap] & 
        {\vert\sing(K)\vert}
        \arrow[d,"\varphi"] \\ 
        \vert\sing_\leq(K)\vert
        \arrow[ur,hook]\arrow[r,"\varphi_\leq"] \arrow[r,"\cong",swap] &
        \vert K\vert.
    \end{tikzcd}
\end{equation*}
commutes up to homotopy. By construction the triangle at the bottom-right commutes, so we can conclude the following:

\begin{corollary}\label{singrealiz2}
    Let \(K\) be a simplicial complex. The map \(\varphi \colon |\sing (K)| \to |K| \) is a homotopy equivalence and it is natural in \(K\). \qed
\end{corollary}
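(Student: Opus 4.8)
The plan is to assemble the corollary purely from results already in place — the order-dependent homeomorphism of Proposition \ref{singrealiz} together with the Dowker machinery of Theorem \ref{thm:dowkerequivalence} — using only formal two-out-of-three arguments, arranged so that the chosen order enters as scaffolding but is absent from the final statement about the order-free map \(\varphi\).

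First I would fix an arbitrary total order \(\le\) on the vertex set \(V\). Having checked (as in the preceding paragraph) that \(R_0, R_1, R_2\) and their transposes are Dowker relations — invoking Corollary \ref{cor: initial terminal} for the transposes and the initial-object argument through Lemma \ref{nerveremark} for \(R_0, R_1, R_2\) — I would apply Theorem \ref{thm:dowkerequivalence} to the morphisms \(R_0 \to R_1\) and \(R_0 \to R_2\), so that every horizontal map in diagram \eqref{complexdiagram} becomes a weak equivalence. Since the rightmost vertical maps \(DR_0^T \to DR_1^T\) and \(DR_0^T \to DR_2^T\) are identities, commutativity of \eqref{complexdiagram} together with the horizontal weak equivalences forces the leftmost vertical maps \(DR_0 \to DR_1 = \sing_\le(K)\) and \(DR_0 \to DR_2 = \sing(K)\) to be weak equivalences as well, hence homotopy equivalences after geometric realization.

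Next I would establish the homotopy-commutativity of the top-left triangle of the final diagram. The two candidate maps \(|DR_0| \to |\sing(K)|\) are \(|D|\) applied to the top edge \(R_0 \to R_2\) of triangle \eqref{smalltriangle} and \(|D|\) applied to the composite bottom path \(R_0 \to R_1 \hookrightarrow R_2\). These do not agree on the nose, but the transformation \(H \colon \id_{[1]\times[0]} \times R_0 \to R_2\) exhibited just before the diagram relates them: by Proposition \ref{homotopy prop}, taking Dowker nerves and realizing makes the two paths homotopic. Combining this homotopy with the two homotopy equivalences from the previous step, a two-out-of-three argument then shows that the inclusion \(|\sing_\le(K)| \hookrightarrow |\sing(K)|\) is itself a homotopy equivalence.

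Finally, since \(\varphi_\le\) is defined as the composite \(|\sing_\le(K)| \hookrightarrow |\sing(K)| \xrightarrow{\varphi} |K|\), the bottom-right triangle commutes strictly; by Proposition \ref{singrealiz} the map \(\varphi_\le\) is a homeomorphism, and we have just shown the inclusion is a homotopy equivalence, so a last two-out-of-three step gives that \(\varphi\) is a homotopy equivalence. As \(\varphi\) was constructed with no reference to the order and is already a natural transformation in \(K\), the conclusion is a natural homotopy equivalence. The main obstacle I anticipate is not this formal assembly but justifying the homotopy-commuting triangle: one must verify that \(H\) is a genuine transformation of relations in the sense of diagram \eqref{transformationdiagram}, so that Proposition \ref{homotopy prop} truly applies, and — more conceptually — confirm that although every intermediate equivalence is built from the chosen order, the resulting statement about \(\varphi\) is order-free, which is precisely what allows naturality to survive.
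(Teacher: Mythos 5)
Your proposal is correct and follows the paper's own argument essentially step for step: the same relations \(R_0, R_1, R_2\), the same verification that they and their transposes are Dowker relations via Corollary \ref{cor: initial terminal} and Lemma \ref{nerveremark}, the same use of the identity maps on the right of diagram \eqref{complexdiagram} to propagate weak equivalences to the left column, the same transformation \(H\) with Proposition \ref{homotopy prop} to get the homotopy-commuting triangle, and the same final two-out-of-three steps through \(\varphi_\le\), with naturality of \(\varphi\) coming from its order-free construction. Nothing to add.
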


In \cite{omar} it is proven that the inclusion \(|\sing_{\le} (K)| \to |\sing (K)|\) is a homotopy equivalence. 
However the lack of functoriality of \(\sing_{\le} (K)\) prevents \cite{omar} from stating the result about naturality in Corollary \ref{singrealiz2}.

This result can be related to topological data analysis since
given a filtered simplicial complex $\{K_\alpha\}_{\alpha\in A}$ we obtain a filtered simplicial set $\{\sing(K_\alpha)\}_{\alpha\in A}$.
The filtered topological spaces obtained by taking geometric realizations of these two filtrations are of the same homotopy type. In particular, they have isomorphic
persistent homology.

\bibliographystyle{plainurl}
\bibliography{biblio}
\end{document}